\numberwithin{equation}{section}
\newcommand{\norm}[1]{\left\Vert#1\right\Vert}
\newcommand{\norme}[1]{\left\Vert{\hskip -2.7pt}\left\vert #1 \right\vert{\hskip -2.7pt}\right\Vert}
\newcommand{\abs}[1]{\left\vert#1\right\vert}
\newcommand{\pd}[1]{\left\langle #1\right\rangle}
\newcommand{\set}[1]{\left\{#1\right\}}
\newcommand{\jm}[1]{\left[#1\right]}
\newcommand{\R}{\mathbb{R}}
\newcommand{\nn}{\nonumber}
\newcommand{\ls}{\lesssim}
\newcommand{\al}{\alpha}
\newcommand{\be}{\beta}
\newcommand{\De}{\Delta}
\newcommand{\ep}{\varepsilon}
\newcommand{\ga}{\gamma}
\newcommand{\Ga}{\Gamma}
\newcommand{\na}{\nabla}
\newcommand{\om}{\omega}
\newcommand{\Om}{\Omega}
\newcommand{\pa}{\partial}
\newcommand{\ta}{\theta}
\newcommand{\dx}{\,\mathrm{d} x}
\newcommand{\M}{\mathcal{M}}
\newcommand{\N}{\mathcal{N}}
\renewcommand{\i}{{\rm\mathbf i}}
\newcommand{\bt}{{\rm\mathbf t}}
\newcommand{\bn}{{\rm\mathbf n}}
\newcommand{\bm}{{\rm\mathbf m}}
\newcommand{\T}{\mathcal{T}}
\newcommand{\E}{\mathcal{E}}
\newcommand{\eq}[1]{\begin{align}#1\end{align}}
\newcommand{\eqn}[1]{\begin{align*}#1\end{align*}}
\title{superconvergence analysis of linear FEM based on the polynomial preserving recovery and Richardson extrapolation for\\ Helmholtz equation with high wave number}
\author{
Yu Du\footnotemark[1]
\thanks{Beijing computational science research center, Beijing, 100193, China. {\tt duyu87@csrc.ac.cn, dynju@qq.com}.
This research work is supported by a Tianhe--2JK computing time award at the Beijing Computational Science Research Center (CSRC).
The research of this author was supported in part by the China Postdoctoral Science Foundation under grant 2016M591053
and the National Natural Science Foundation of China under grants 11601026}
 \and
Haijun Wu\footnotemark[2]
\thanks{Department of Mathematics, Nanjing University, Jiangsu,
210093, China. {\tt hjw@nju.edu.cn}. This research was
partially supported by the National Natural Science Foundation of China under grants 11525103 and 91130004.}
\and
Zhimin Zhang\footnotemark[3]
\thanks{Beijing Computational Science Research Center, Beijing, 100193 and Department of Mathematics, Wayne State University, Detroit, MI 48202. {\tt zmzhang@csrc.ac.cn, zzhang@math.wayne.edu}. The research of this author was supported in part by the National Natural Science Foundation of China under grants 11471031, 91430216, U1530401, and the U.S. National Science Foundation through grant DMS--1419040.}
}
\begin{document}

\maketitle

\vspace{-1.4in}
\slugger{sinum}{200x}{xx}{x}{xxx--xxx}
\vspace{1.4in}

\setcounter{page}{1}

\begin{abstract} We study superconvergence property
of the linear finite element method with the polynomial preserving recovery (PPR)
and Richardson extrapolation for the two dimensional Helmholtz equation. The $H^1$-error estimate
with explicit dependence on the wave number $k$ {is} derived.
 First, we prove that under the assumption $k(kh)^2\leq C_0$ ($h$ is the mesh size) and certain mesh condition,
 the estimate between the finite element solution and the linear interpolation of
the exact solution is superconvergent under the $H^1$-seminorm, although the pollution error still exists.
Second, we prove a similar result for the recovered gradient by PPR and
found that the PPR can only improve the interpolation error and has no effect on the pollution error.
Furthermore, we estimate the error between the finite element gradient and recovered gradient and discovered that
the pollution error is canceled between these two quantities.
Finally, we apply the Richardson extrapolation to recovered gradient and demonstrate
numerically that PPR combined with the Richardson extrapolation can reduce the interpolation and pollution
errors simultaneously, and therefore, leads to an asymptotically exact {\it a posteriori} error estimator.
All theoretical findings are verified by numerical tests.
\end{abstract}

\begin{keywords}
Helmholtz equation, large wave number, pollution errors,
superconvergence, polynomial preserving recovery, finite element methods
\end{keywords}

\begin{AMS}
65N12, 
65N15, 
65N30, 
78A40  
\end{AMS}

\setcounter{page}{1}

\section{Introduction}\label{intro}
Let $\Om\in\R^2$ be a bounded polygon with boundary $\Ga:=\pa\Om$. We consider the Helmholtz problem:
\eq{
-\De u - k^2 u &=f  \qquad\mbox{in  } \Om,\label{eq1.1a}\\
\frac{\pa u}{\pa n} +\i k u &=g \qquad\mbox{on } \Ga,\label{eq1.1b}
}
where $\i=\sqrt{-1}$ denotes the imaginary unit and $n$ denotes the unit outward normal to $\Ga$.
The above Helmholtz problem is an approximation of the
following acoustic scattering problem (with time dependence $e^{\i\om t}$):
\eq{ -\De u-k^2u &= f \quad \mathrm{in}\ \R^2,\label{eq1.2a}\\
\sqrt{r}\bigg(\frac{\pa(u-u^{inc})}{\pa r}+\i k(u-u^{inc})\bigg)&\rightarrow0 \quad \mathrm{as}\ r=\abs{x}\rightarrow\infty, \label{eq1.2b}   }
where $u^{inc}$ is the incident wave and $k$ is known as the wave number. The Robin boundary condition
\eqref{eq1.1b} is known as the first order approximation of the radiation condition \eqref{eq1.2b} (cf. \cite{em79}).
We remark that the Helmholtz problem \eqref{eq1.1a}--\eqref{eq1.1b} also arises in applications as a consequence of frequency domain
treatment of attenuated scalar waves (cf. \cite{dss94}).

We know that the finite element method of fixed order for the Helmholtz
problem \eqref{eq1.1a}--\eqref{eq1.1b} at high frequencies ($k\gg1$) is subject to the effect of pollution:
the ratio of the error of the finite element solution to the error of the best approximation
from the finite element space cannot be uniformly bounded with respect to $k$
\cite{Ainsworth04,bs00,bips95,dbb99,harari97,ib95a,ib97}.
More precisely, the linear finite element method for a $2$-D Helmholtz problem
satisfies the following error estimate under the mesh constraint $k(kh)^2\leq C_0$ \cite{zw,dw}:
\eq{ \norm{\na(u-u_h)}_{L^2(\Om)} \leq C_1kh+C_2k(kh)^2. \label{int-eq1}}
Here $u_h$ is the linear finite element solution, $h$ is the mesh size and $C_i,i=1,2$ are positive constants independent of $k$ and $h$.
It is easy to see that the order of the first term on the right hand side of \eqref{int-eq1} is the same to that
of the interpolation error in $H^1$-seminorm and it can dominate the error bound only if $k(kh)$ is small
enough. However, the second term on the {right-hand side} of \eqref{int-eq1} dominates the estimate under other mesh conditions.
For example, $kh$ is fixed and $k$ is large enough. The term $C_2k(kh)^2$ is called the pollution error of
the finite element solution.

Considerable efforts have been made in analysis of different numerical methods
for the Helmholtz problem with large wave number in the literature.
The readers are referred to \cite{ak79,dss94,sch74} for asymptotic error estimates of general DG methods and
\cite{ib95a,ib97} for pre-asymptotic error estimates of a one-dimensional problem discretized on equidistant grid.
For more pre-asymptotic error estimates, Please refer to \cite{ms10,ms11}
and \cite{zbw,zw} for classical finite element methods as well as interior penalty finite element methods.
For other methods solving the Helmholtz problems, such as the interior
penalty discontinuous Galerkin method or the source transfer domain decomposition method,
one can read \cite{mps13,fw09,fw11,zd,dzh,cx}.

In this work, we investigate the superconvergence property of the linear finite element method
when being post-processed by the polynomial preserving recovery (PPR) for the Helmholtz problem.
PPR was proposed by Zhang and Naga \cite{zn05} in 2004 and has been successfully applied to finite element methods.
COMSOL Multiphysics adopted PPR as a post-processing tool since 2008 \cite{comsol}.
One important feature of PPR is its superconvergence property for the recovered gradient.
To learn more about PPR, readers are referred to \cite{z04,z04t,nz04,wz07}.
Some theoretical results about recovery techniques and recovery-type error
estimators can be found in \cite{bx03,lmw,zl99,xz03,yz01}.

Let $V_h$ be the linear finite element space and denote $G_h:V_h\rightarrow V_h\times V_h$ as the gradient recovery operator from PPR.
We obtain the following estimate:
\eq{ \norm{\na u-G_hu_h}_{L^2(\Om)} \ls kh^{1+\al} + k(kh)^2, \qquad (0<\alpha\le 1) \label{eq_sec_gh}}
under the mesh condition $k(kh)^2\leq C$, where $C$ is a constant independent of $k$ and $h$. Furthermore, we prove
\eqn{ \norm{G_hu_h-\na u_h}_{L^2(\Om)} \ls kh + k(kh)^3, }
which means that $\norm{G_hu_h-\na u_h}_{L^2(\Om)}$, i.e., using PPR alone, can not measure the $H^1$-error of the numerical solution well. However, the super-convergence $O(h^2)$ of the recovered gradient with $\al=1$ makes it possible to apply
the Richardson extrapolation on the recovered gradient of the numerical solution. We show the asymptotic exactness of the a posteriori error estimator $\norm{RG_hu_h-\na u_h}_{L^2(\Om)}$ by numerical tests, where $R\cdot$ is the Richardson extrapolation operator.

The remainder of this paper is organized as follows: some notations, FEM and the mesh constraints are
introduced in section \ref{pre}. In section \ref{sup}, we prove the superconvergence between the interpolant and
the finite element solution to the problem with Robin boundary \eqref{eq1.1a}--\eqref{eq1.1b}. In section \ref{gra},
we prove the superconvergence property of $G_h$ in the Sobolev space $H^3$ and show the most important result,
that is the error estimate of $G_hu_h$. Then we try to give the reason for the effect of $G_h$ to the pollution error in
section \ref{est}. Finally, we simulate a model problem by the linear FEM, PPR method and the Richardson extrapolation in section \ref{num}.
It is shown that the recovered gradient can be improved by the Richardson extrapolation further and the a posterior error estimator based on the PPR
and Richardson extrapolation is exact asymptotically.

Throughout the paper, $C$ is used to denote a generic positive constant which is
independent of $h, k, f$ and $g$. We also use the shorthand
notation $A\ls B$ and $A\gtrsim B$ for the inequality $A\leq CB$ and $A\geq B$.
$A\eqsim B$ is a shorthand notation for the statement $A\ls B$ and $B\ls A$.
We assume that $k\gg1$ since we are considering high-frequency problems and that $k$ is constant
on $\Om$ for ease of presentation. We also assume that $\Om$ is a strictly star-shaped domain.
Here ``strictly star-shaped'' means that there exist a point $x_\Om\in\Om$ and a positive
constant $c_\Om$ depending only on $\Om$ such that
\eqn{ (x-x_\Om)\cdot n\geq c_\Om\quad \forall x\in\Ga. }

\section{Preliminaries} \label{pre}
We first introduce some notation. The standard Sobolev and Hilbert space, norm, and inner product notation
are adopted. Their definitions can be found in \cite{bs08,ciarlet78}. In particular, $(\cdot,\cdot)_Q$ and $\pd{\cdot,\cdot}_\Sigma$
for $\Sigma=\pa Q$ denote the $L^2$-inner product on complex-valued $L^2(Q)$ and $L^2(\Sigma)$ spaces, respectively.
For simplicity, we denote $(\cdot,\cdot):=(\cdot,\cdot)_\Om$, $\pd{\cdot,\cdot}:=\pd{\cdot,\cdot}_{\pa\Om}$,
$\norm{\cdot}_j:=\norm{\cdot}_{H^j(\Om)}$, and $\abs{\cdot}_j:=\abs{\cdot}_{H^j(\Om)}$.

Let $\T_h$ be a regular triangulation of the domain $\Om$, $\E_h$ be the set of all edges of $\T_h$ and
$\N_h$ be the set of all nodal points. For any $\tau\in\T_h$, we denote by $h_\tau$ its diameter and by $\abs{\tau}$ its area.
Similarly, for each edge $e\in\E_h$, define $h_e:={\rm diam}(e)$. Let $h=\max_{\tau\in\T_h}h_\tau$. Assume that $h_\tau\eqsim h$.
We denote all the boundary edges by $\E_h^B:=\set{e\in\E_h:e\subset\Ga}$ and the interior edges by
$\E_h^I:=\E_h\backslash\E_h^B$.

Let $V_h$ be the approximation space of continuous piecewise linear polynomials,
 that is,
\eqn{ V_h:=\set{v_h\in H^1(\Om): v_h|_\tau\in P_1(\tau)\ \forall \tau\in\T_h}, }
where $P_1(\tau)$ denotes the set of all polynomials defined on $\tau$ with degree $\leq1$.

Denote by $a(u,v)=(\na u,\na v)\ \forall u,v\in H^1(\Om)$. The variational problem to \eqref{eq1.1a}--\eqref{eq1.1b} reads as follows:
Find $u\in H^1(\Om)$ such that
\eq{ a(u,v)-k^2(u,v)+\i k\pd{u,v}=(f,v)+\pd{g,v}\quad \forall v\in H^1(\Om). \label{pre-var} }
Then the linear finite element solution $u_h\in V_h$ satisfies
\eq{ a(u_h,v_h)-k^2(u_h,v_h)+\i k\pd{u_h,v_h}=(f,v_h)+\pd{g,v_h}\quad \forall v_h\in V_h. \label{fem} }

Throughout the paper, we assume that the data $f$ is sufficiently smooth and
$g\in H^2(\Ga)$ such that $u\in H^3(\Om)$.
Denote by
\eq{C_{u,g}=\sum_{j=1}^3 k^{-(j-1)}\norm{u}_j + \sum_{j=1}^2 k^{-j}\abs{g}_{H^j(\Ga)}.}

We remark that in recent years there have been some superconvergence results for recovered gradients \cite{wz07,xz03,yz01}.
All of them assumed at least $u\in H^3(\Om)\cap W^2_\infty(\Om)$ instead of $u\in H^3(\Om)$.
The function $C_{u,g}$ could be treated as a constant in this paper since $\norm{u}_j$
is bounded by $\max(k^0,k^{j-1})$. The reader is referred to \cite{mps13,ms10,ms11} for the estimates of $u$.

The following norm on $H^1(\Om)$ is useful for the subsequent analysis:
\eq{ \norme{v}:=\big(\norm{\na v}_0^2+k^2\norm{v}_0^2 \big)^{\frac{1}{2}}\quad \forall v\in H^1(\Om).\label{norme} }
%
%
The following lemma is proved in \cite{zw,dw}.
\begin{lemma} \label{lemma1}
For $u$ and $u_h$, the solutions to \eqref{eq1.1a}--\eqref{eq1.1b} and \eqref{fem}, there exists a
constant $C_0$ independent of $k$ and $h$ such that if $k(kh)^2\leq C_0$, then the following error estimates hold:
\eqn{ \norm{u-u_h}_1 &\ls \big(kh+k(kh)^2\big) \frac{\abs{u}_2}{k},\\
k\norm{u-u_h}_0 &\ls \big((kh)^2+k(kh)^2\big) \frac{\abs{u}_2}{k}. }
\end{lemma}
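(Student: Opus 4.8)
The plan is to regard \eqref{fem} as a Galerkin discretization of the \emph{indefinite} sesquilinear form $B(w,v):=a(w,v)-k^2(w,v)+\i k\pd{w,v}$ and to run the duality (Schatz--Aubin--Nitsche) argument for sign-indefinite problems, the only nonstandard points being to carry the explicit powers of $k$ and $h$ and to pin down the precise mesh threshold. Setting $e:=u-u_h$ and subtracting \eqref{fem} from \eqref{pre-var} gives Galerkin orthogonality $B(e,v_h)=0$ for all $v_h\in V_h$. With $u_I\in V_h$ the nodal interpolant and the choice $v_h=u_I-u_h=e-(u-u_I)$ this yields $B(e,e)=B(e,u-u_I)$, whose real and imaginary parts are
\eqn{ \norm{\na e}_0^2-k^2\norm{e}_0^2=\re B(e,u-u_I),\qquad k\norm{e}_{0,\Ga}^2=\im B(e,u-u_I). }
Since $\abs{B(e,u-u_I)}\ls\norme{e}\,\norme{u-u_I}+k\norm{e}_{0,\Ga}\norm{u-u_I}_{0,\Ga}$ and $\norme{u-u_I}\ls h\abs{u}_2$ (using $kh\ls1$), the first identity controls $\norm{\na e}_0$ \emph{once} the indefinite contribution $k^2\norm{e}_0^2$ is controlled; the heart of the matter is thus a sharp bound for $k\norm{e}_0$.

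For that I would use duality. Let $z\in H^1(\Om)$ solve the dual problem $-\De z-k^2z=e$ in $\Om$, $\pa_n z-\i k z=0$ on $\Ga$, for which the star-shaped stability gives $\norme{z}\ls\norm{e}_0$ and $\abs{z}_2\ls k\norm{e}_0$. Then $\norm{e}_0^2=B(e,z)=B(e,z-z_I)$ by Galerkin orthogonality, where $z_I\in V_h$ interpolates $z$. Estimating the three terms of $B(e,z-z_I)$ by Cauchy--Schwarz together with $\norm{\na(z-z_I)}_0\ls h\abs{z}_2$, $\norm{z-z_I}_0\ls h^2\abs{z}_2$, a trace/interpolation bound on $\Ga$, and $\abs{z}_2\ls k\norm{e}_0$, produces an inequality of the shape
\eqn{ \norm{e}_0\ls kh\,\norm{\na e}_0+k(kh)^2\norm{e}_0+(\text{boundary terms}). }
Under $k(kh)^2\le C_0$ with $C_0$ small the middle term is absorbed on the left, giving $\norm{e}_0\ls kh\,\norm{\na e}_0+\cdots$. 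Inserting this into the real-part identity absorbs $k^2\norm{e}_0^2$ into $\norm{\na e}_0^2$ and closes the $H^1$-estimate, and the second ($L^2$) estimate is then read off from the same relation $\norm{e}_0\ls kh\,\norm{\na e}_0+\cdots$, its approximation part improving by one power of $kh$ while the pollution part stays the same.

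The step I expect to be genuinely delicate is the absorption above, because the crude bound $\abs{z}_2\ls k\norm{e}_0$ alone only yields the suboptimal threshold $k^2h\ls1$ rather than the advertised $k(kh)^2\le C_0$. To reach the sharp condition I would not estimate $z-z_I$ through $\abs{z}_2$ only, but split the dual solution as $z=z_{H^2}+z_{\mathcal A}$ into a part with a $k$-\emph{independent} $H^2$ bound and an oscillatory analytic part whose derivatives grow in $k$ but which is approximated one order more accurately, in the spirit of the frequency splitting of Melenk--Sauter. Approximating the two pieces separately upgrades the dual approximation property from $O(kh)$ to $O\big((kh)^2\big)$, and precisely this extra factor $kh$ turns the admissibility condition into $k(kh)^2\le C_0$. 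Carrying this decomposition through, handling the boundary contributions from the Robin condition, and doing so under the low regularity $u\in H^3(\Om)$, is the technical core, which is why the statement is quoted from \cite{zw,dw}.
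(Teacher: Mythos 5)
Your setup is fine as far as it goes: the Galerkin orthogonality, the real/imaginary-part identities, the dual problem with the star-shaped regularity bounds $\norme{z}\ls\norm{e}_0$, $\abs{z}_2\ls k\norm{e}_0$, and the diagnosis that the naive Schatz argument only reaches the threshold $k^2h\ls 1$ are all correct. The genuine gap is the mechanism you invoke to get past that threshold. The Melenk--Sauter splitting $z=z_{H^2}+z_{\mathcal{A}}$ controls the \emph{$k$-dependence} of Sobolev norms of the dual solution; it cannot improve the \emph{$h$-order} of its best approximation by piecewise linears. Since the gradient of a $P_1$ function is piecewise constant, the energy-norm best approximation of \emph{any} target, however smooth or analytic, saturates at first order: $\inf_{v_h}\norme{z_{\mathcal{A}}-v_h}\ls (h+kh^2)\abs{z_{\mathcal{A}}}_2\ls kh(1+kh)\norm{e}_0$, exactly as without the splitting. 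So the adjoint approximability remains $O(kh)$, not $O((kh)^2)$, the absorption step again demands $k^2h\le c$, and the hypothesis $k(kh)^2\le C_0$ does not deliver this (it allows $k^2h\sim\sqrt{C_0k}\to\infty$). The splitting pays off only when the polynomial degree can exploit analyticity, i.e.\ $p\ge2$ or $p\gtrsim\log k$ in the $hp$ theory; for fixed $p=1$ it degenerates back to the classical condition.

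What the cited proofs in \cite{zw,dw} actually do (the paper itself gives no proof, and it foreshadows the device in its Section 5) is a \emph{modified duality argument} built on the elliptic projection \eqref{ellpro}: in the Aubin--Nitsche step one compares $z$ not with an interpolant but with its adjoint elliptic projection $P_h^*z$. Because $u_h$ and $P_hu$ lie in $V_h$, the pairing $a(e,z-P_h^*z)+\i k\pd{e,z-P_h^*z}$ collapses to terms involving only $u-P_hu$ (independent of the size of $e$), so the single term that must be absorbed is $k^2(e,z-P_h^*z)$. This term requires only the \emph{$L^2$}-error of the projection, which, by the Aubin--Nitsche estimate for the coercive projection problem (the analogue of \eqref{ellemm-eq2}), carries an extra factor of $h$: $\norm{z-P_h^*z}_0\ls h\inf_{v_h}\norme{z-v_h}\ls kh^2(1+kh)\norm{e}_0$. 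Hence $k^2\abs{(e,z-P_h^*z)}\ls k(kh)^2(1+kh)\norm{e}_0^2$, which is precisely what absorbs under $k(kh)^2\le C_0$. In short, the sharp threshold comes from an extra factor of $h$ in an $L^2$ bound for an elliptic projection, not from an extra factor of $kh$ in the energy-norm approximation of the dual solution; without this idea (or an equivalent, such as the discrete stability arguments of Feng--Wu), your outline cannot be completed to prove the lemma as stated.
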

Note that $k^{-1}\abs{u}_2\le C_{u,g}$.

We begin with some definitions regarding meshes.
For an interior edge $e\in\E_h^I$, we denote $\Om_e=\tau_e\cup\tau_e'$,
a patch formed by the two elements $\tau_e$ and $\tau_e'$ sharing $e$, see Figures~\ref{figNot}-\ref{figNotb}.
For any edge $e\in\E_h$ and an element $\tau$ {with} $e\subset\tau$,
$\ta_e$ denotes the angle opposite of the edge $e$ in $\tau${,
$\bt_e$ denotes the unit tangent vector of $e$ with counterclockwise orientation and $\bn_e$, the unit outward normal vector of $e$,
$h_e, h_{e+1}$, and $h_{e-1}$ denote the lengths of the three edges of $\tau$, respectively.
Here the subscript $e+1$ or $e-1$ is for orientation.
Note that all triangles in the triangulation are orientated counterclockwise,
and the} index $'$ is added for the corresponding quantities in $\tau'$ with $\bt_e=-\bt_e'$ and $\bn_e=-\bn_e'$
due to the orientation.

For any $e\in\E_h^I$ (cf. Figure~\ref{figNot}), we say that $\Om_e$ is an $\ep$ approximate parallelogram if the
lengths of any two opposite edges differ by at most $\ep$, that is,
\eqn{ \abs{h_{e-1}-h_{e-1}'} + \abs{h_{e+1}-h_{e+1}'}\le\ep. }

For any $e\in\E_h^B$ (cf. Figure~\ref{figNotb}), we say that $\tau_e$ is an $\ep$ approximate isosceles triangle if the lengths of
its two edges $e-1$ and $e+1$ differ by at most $\ep$, that is,
\eqn{ \abs{h_{e+1}-h_{e-1}}\le\ep. }

\begin{definition} \label{meshcond}
The triangulation $\T_h$ is said to satisfy \emph{$\al$ approximation condition} if there exists a constant $\al\geq0$
such that
\begin{itemize}
  \item[(a)] the patch $\Om_e$ is an $O(h^{1+\al})$ approximate parallelogram for any interior edge $e\in\E_h^I$;
  \item[(b)] the triangle $\tau_e$ is an $O(h^{1+\al})$ approximate isosceles triangle for any boundary edge $e\in\E_h^B$;
\end{itemize}
\end{definition}

\emph{Remark} 2.1.
For interior edges, the restriction ``\emph{$h^{1+\al}$ approximate parallelogram}'' is often used
 to prove the superconvergence property for problems with the Dirichlet boundary condition \cite{cx07,wz07},
 when boundary edges $\E_h^B$ can be ignored since $u_h-u_I\equiv0$ where
$u_I$ is the linear interpolant of $u$. However, ignoring the edges in $\E_h^B$ is impossible for the Robin condition
\eqref{eq1.1b}. As a result, more restrictions are put on the boundary edges.
Note that this restriction is technique and just for theoretical
purpose. In fact, one can still get results of superconvergence under general meshes which do not satisfy the condition, such as Chevron pattern uniform mesh.

\begin{figure}
\begin{center}
\setlength{\unitlength}{0.25mm}
  \begin{picture}(390,110)

  \linethickness{0.25mm}
  \put(110,10){\line(1,0){150}}
  \put(160,110){\line(1,0){150}}
  \put(110,10){\line(1,2){50}}
  \put(260,10){\line(1,2){50}}
  \put(260,10){\line(-1,1){100}}

  \put(185,60){$e$}
  \put(150,40){$\tau$}
  \put(260,80){$\tau'$}
  \put(100,60){$e+1$}
  \put(185,0){$e-1$}
  \put(120,15){$\ta_e$}
  \put(285,95){$\ta_e'$}

  \put(210,60){\vector(-1,-1){25}}
  \put(210,60){\vector(1,1){25}}
  \put(170,35){$\bn_e'$}
  \put(235,85){$\bn_e$}
  \end{picture}
\end{center}
\caption{Notation in the patch $\Om_e$.}
\label{figNot}
\end{figure}
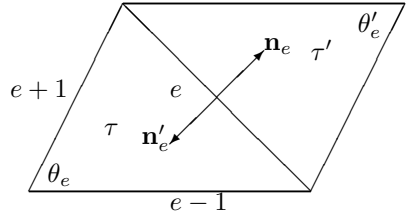

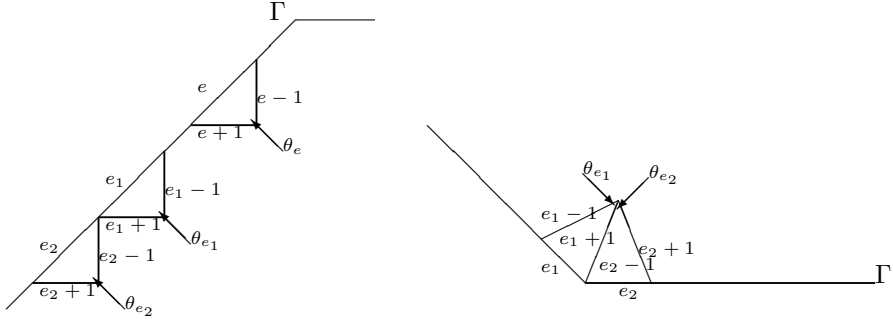
\begin{figure}
\begin{center}
\setlength{\unitlength}{0.35mm}
  \begin{picture}(390,110)

  \put(120,110){\line(1,0){30}}
  \put(10,0){\line(1,1){110}}
  \put(110,110){$\Ga$}

  \linethickness{0.25mm}
  \put(20,10){\line(1,0){25}}
  \put(45,10){\line(0,1){25}}
  \put(22.5,22.5){\scriptsize $e_2$}
  \put(45,18.5){\scriptsize $e_2-1$}
  \put(22.5,5){\scriptsize $e_2+1$}
  \put(55,0){\vector(-1,1){12}}
  \put(55,0){\scriptsize $\ta_{e_2}$}


  \put(45,35){\line(1,0){25}}
  \put(70,35){\line(0,1){25}}
  \put(47.5,47.5){\scriptsize $e_1$}
  \put(70,43.5){\scriptsize $e_1-1$}
  \put(47.5,30){\scriptsize $e_1+1$}
  \put(80,25){\vector(-1,1){12}}
  \put(80,25){\scriptsize $\ta_{e_1}$}

  \put(80,70){\line(1,0){25}}
  \put(105,70){\line(0,1){25}}
  \put(82.5,82.5){\scriptsize $e$}
  \put(105,78.5){\scriptsize $e-1$}
  \put(82.5,65){\scriptsize $e+1$}
  \put(115,60){\vector(-1,1){12}}
  \put(115,60){\scriptsize $\ta_{e}$}

  \put(230,10){\line(1,0){110}}
  \put(230,10){\line(-1,1){60}}
  \put(340,10){$\Ga$}

  \put(230,10){\line(2,5){12.5}}
  \put(255,10){\line(-2,5){12.5}}
  \put(242.5,5){\scriptsize $e_2$}
  \put(250,21){\scriptsize $e_2+1$}
  \put(235,15){\scriptsize $e_2-1$}
  \put(254,50){\vector(-1,-1){12}}
  \put(254,50){\scriptsize $\ta_{e_2}$}

  \put(213.3,26.7){\line(2,1){29}}
  \put(213,14.3){\scriptsize $e_1$}
  \put(213,34.3){\scriptsize $e_1-1$}
  \put(220,25){\scriptsize $e_1+1$}
  \put(229,51.5){\vector(1,-1){12}}
  \put(229,51.5){\scriptsize $\ta_{e_1}$}

  \end{picture}
\end{center}
\caption{Notation in the boundary elements.}
\label{figNotb}
\end{figure}

\section{ Superconvergence between the finite element solution and linear interpolant} \label{sup}
Different from most other investigations in the literatures where the Dirichlet boundary condition is assumed, we
consider the superconvergence between the FE solution $u_h$ under the Robin boundary condition
and the linear interpolant $u_I$ of the exact solution $u$.
Since $u_h$ may not equal $u_I$ on the boundary $\Ga$, some more strict mesh conditions and special arguments are needed to
establish the desired superconvergence result.

First we introduce a quadratic interpolant $\psi_Q=\Pi_Q\psi$ of $\psi$ based on nodal values and
moment conditions on edges,
\eq{ (\Pi_Q\phi)(z)=\phi(z),\quad \int_e\Pi_Q\phi=\int_e\phi\quad \forall z\in\N_h,e\in\E_h. }

The following fundamental identity for $v_h\in P_1(\tau)$ has been proved in \cite{cx07}:
\eq{ \int_\tau \na(\phi-\phi_I)\cdot\na v_h = \sum_{e\in\pa\tau} \bigg( \be_e\int_e\frac{\pa^2\phi_Q}{\pa \bt_e^2}\frac{\pa v_h}{\pa \bt_e} + \ga_e\int_e\frac{\pa^2\phi_Q}{\pa \bt_e\pa \bn_e}\frac{\pa v_h}{\pa \bt_e} \bigg) \label{fdmtliequ} }
where
\eq{ \be_e=\frac{1}{12}\cot\ta_e(h_{e+1}^2-h_{e-1}^2),\quad \ga_e=\frac{1}{3}\cot\ta_e\abs{\tau}, }
and $\phi_I\in P_1(\tau)$ is the linear interpolant of $\phi$ on $\tau$. The following lemma can be easily
obtained \cite{cx07,wz07}.
\begin{lemma} \label{parsestm}
We denote $\bm_e$ by $\bt_e$ or $\bn_e$. Assume that $\T_h$ satisfies the \emph{$\al$ approximation condition}, then
we have the following estimates:
\begin{itemize}
  \item[(a)] For any interior edge $e\in\E_h^I$,
  \eq{ &\abs{\be_e}+\abs{\be_e'}\ls h^2,\quad \abs{\ga_e}+\abs{\ga_e'}\ls h^{2};\label{beta_gamma1}\\
  &\abs{\be_e-\be_e'}\ls h^{2+\al},\quad \abs{\ga_e-\ga_e'}\ls h^{2+\al}.\label{beta_gamma2}}
  \item[(b)] For two adjacent edges $e_1,e_2\in\E_h^B$, that is $e_1\cap e_2\neq\emptyset$,
  \eq{ &\abs{\be_{e_1}}+\abs{\be_{e_2}}\ls h^{2+\al},\quad \abs{\ga_{e_1}}+\abs{\ga_{e_2}}\ls h^{2}\label{beta_gamma3}\\
  & \abs{\ga_{e_1}-\ga_{e_2}}\ls h^{2+\al}\label{beta_gamma4}}
  \item[(c)] For any edge $e\in\E_h$, $e\subset\pa\tau_e$,
  \eq{  &\int_e \frac{\pa^2\phi}{\pa \bt_e\pa \bm_e}\frac{\pa v_h}{\pa \bt_e} \ls (\norm{\phi}_{H^{3}(\tau_e)}+h^{-1}\norm{\phi}_{H^{2}(\tau_e)})\norm{\na v_h}_{L^2(\tau_e)};\label{trcinq1}\\
  &\int_e \frac{\pa^2(\phi-\phi_Q)}{\pa \bt_e\pa \bm_e}\frac{\pa v_h}{\pa \bt_e} \ls \abs{\phi}_{H^{3}(\tau_e)}\norm{\na v_h}_{L^2(\tau_e).\label{trcinq2}}
}
\end{itemize}
\end{lemma}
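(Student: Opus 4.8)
My plan is to separate the purely geometric estimates of parts (a)--(b) from the analytic trace estimates of part (c). Throughout I would use shape regularity, which guarantees $h_e\eqsim h_{e\pm1}\eqsim h$, $\abs{\tau}\eqsim h^2$, and that every interior angle is bounded away from $0$ and $\pi$, so that $\cot\ta_e$ and $\cot\ta_e'$ are $O(1)$. With these facts the first (unstarred) bounds are immediate: from $\be_e=\frac1{12}\cot\ta_e(h_{e+1}^2-h_{e-1}^2)$ and $\abs{h_{e+1}^2-h_{e-1}^2}=\abs{h_{e+1}+h_{e-1}}\,\abs{h_{e+1}-h_{e-1}}\ls h\cdot h=h^2$ one gets $\abs{\be_e}\ls h^2$ for interior edges, while for a boundary edge the approximate isosceles condition upgrades $\abs{h_{e+1}-h_{e-1}}\ls h^{1+\al}$, hence $\abs{\be_e}\ls h^{2+\al}$ and thus $\abs{\be_{e_1}}+\abs{\be_{e_2}}\ls h^{2+\al}$; the bounds $\abs{\ga_e}\ls h^2$ follow directly from $\ga_e=\frac13\cot\ta_e\abs{\tau}$ and $\abs{\tau}\eqsim h^2$.

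The essential content is the difference estimates $\abs{\be_e-\be_e'}\ls h^{2+\al}$ and $\abs{\ga_e-\ga_e'}\ls h^{2+\al}$, for which I would prove a small geometric perturbation lemma. In the interior patch $\Om_e$ the two triangles share the edge $e$, so the law of cosines gives $h_e^2=h_{e+1}^2+h_{e-1}^2-2h_{e+1}h_{e-1}\cos\ta_e=h_{e+1}'^2+h_{e-1}'^2-2h_{e+1}'h_{e-1}'\cos\ta_e'$. Subtracting the two and using that the approximate parallelogram condition forces $\abs{h_{e\pm1}-h_{e\pm1}'}\ls h^{1+\al}$, the differences of squares and of products are $O(h^{2+\al})$, so that $h_{e+1}'h_{e-1}'(\cos\ta_e-\cos\ta_e')=O(h^{2+\al})$ and hence $\abs{\ta_e-\ta_e'}\ls\abs{\cos\ta_e-\cos\ta_e'}\ls h^{\al}$ (cotangent being smooth away from $0,\pi$). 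Applying the same perturbation to $\abs{\tau}=\frac12 h_{e+1}h_{e-1}\sin\ta_e$ yields $\bigl|\,\abs{\tau}-\abs{\tau'}\,\bigr|\ls h^{2+\al}$. Writing each difference as a telescoping sum (the difference in the length/area factor times $\cot\ta_e$, plus the common factor times $\cot\ta_e-\cot\ta_e'$) and inserting $\abs{\cot\ta_e-\cot\ta_e'}\ls h^{\al}$ together with $\abs{h_{e+1}^2-h_{e-1}^2}\ls h^2$ and $\abs{\tau}\eqsim h^2$ gives both $h^{2+\al}$ bounds.

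The main obstacle is the boundary estimate $\abs{\ga_{e_1}-\ga_{e_2}}\ls h^{2+\al}$ for two adjacent boundary edges $e_1,e_2\in\E_h^B$, because here the two triangles are not opposite sides of a single patch, so I must extract the comparison from the local geometry near $\Ga$; the estimate is delicate since each $\ga_{e_i}$ is only $O(h^2)$ individually and a naive bound would lose the extra $h^\al$. I would argue that, in the boundary layer (cf. Figure~\ref{figNotb}), $\tau_{e_1}$ and $\tau_{e_2}$ share an edge and each is $O(h^{1+\al})$ approximately isosceles with base on $\Ga$; combining the two isosceles relations with the near-collinearity of consecutive boundary edges shows that the bases and legs of the two triangles match up to $O(h^{1+\al})$. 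The same law-of-cosines/area perturbation as above then gives $\abs{\cot\ta_{e_1}-\cot\ta_{e_2}}\ls h^{\al}$ and $\bigl|\,\abs{\tau_{e_1}}-\abs{\tau_{e_2}}\,\bigr|\ls h^{2+\al}$, whence $\abs{\ga_{e_1}-\ga_{e_2}}\ls h^{2+\al}$. Carefully bookkeeping the vertex/edge incidences of the boundary triangles is the step that needs the most care.

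Finally, for part (c) I would use that $v_h\in P_1(\tau_e)$ makes $\pa v_h/\pa\bt_e$ constant on $e$ with $\abs{\pa v_h/\pa\bt_e}\le\abs{\na v_h}\eqsim h^{-1}\norm{\na v_h}_{L^2(\tau_e)}$. Pulling this constant out and applying Cauchy--Schwarz on $e$ gives $\int_e\frac{\pa^2\phi}{\pa\bt_e\pa\bm_e}\frac{\pa v_h}{\pa\bt_e}\ls h^{-1}\norm{\na v_h}_{L^2(\tau_e)}\,h^{1/2}\,\norm{D^2\phi}_{L^2(e)}$, and the scaled trace inequality $\norm{w}_{L^2(e)}\ls h^{-1/2}\norm{w}_{L^2(\tau_e)}+h^{1/2}\norm{\na w}_{L^2(\tau_e)}$ with $w=D^2\phi$ produces $(h^{-1}\norm{\phi}_{H^2(\tau_e)}+\norm{\phi}_{H^3(\tau_e)})\norm{\na v_h}_{L^2(\tau_e)}$, which is \eqref{trcinq1}. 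For \eqref{trcinq2} I would run the same computation on $\phi-\phi_Q$ and exploit that $\Pi_Q$ reproduces $P_2(\tau_e)$: then $D^3\phi_Q=0$, so $\norm{D^3(\phi-\phi_Q)}_{L^2(\tau_e)}=\abs{\phi}_{H^3(\tau_e)}$, while the standard interpolation estimate gives $\norm{D^2(\phi-\phi_Q)}_{L^2(\tau_e)}\ls h\,\abs{\phi}_{H^3(\tau_e)}$; substituting these into the trace bound cancels the $h^{-1}\norm{\phi}_{H^2}$ term and leaves exactly $\abs{\phi}_{H^3(\tau_e)}\norm{\na v_h}_{L^2(\tau_e)}$.
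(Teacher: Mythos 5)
Your parts (a) and (c) are essentially correct and in fact supply details the paper compresses into two sentences: the interior-edge argument (law of cosines across the shared edge, $\abs{\cot\ta_e-\cot\ta_e'}\ls h^{\al}$, then telescoping) proves \eqref{beta_gamma1}--\eqref{beta_gamma2}, and the inverse-estimate/scaled-trace/Bramble--Hilbert argument proves \eqref{trcinq1}--\eqref{trcinq2}; the unstarred boundary bounds \eqref{beta_gamma3} are also fine.

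The genuine gap is in \eqref{beta_gamma4}. Your argument rests on the claim that the triangles $\tau_{e_1}$ and $\tau_{e_2}$ attached to two adjacent boundary edges share an edge. That is false in general: adjacency in the lemma means only $e_1\cap e_2\neq\emptyset$, i.e.\ the two boundary edges share a vertex $z_i$, and between $\tau_{e_1}$ and $\tau_{e_2}$ there is in general a fan of other triangles meeting at $z_i$ --- exactly the configuration in the left panel of the paper's Figure~\ref{figNotb}, where the two boundary triangles touch only at a point. Your fallback, ``near-collinearity of consecutive boundary edges,'' is not an assumption of the lemma (it fails at the corners of the polygon, $z_i\in\N_h^v$), and even on a straight portion of $\Ga$ it cannot by itself relate the sizes of the two triangles: each can be $O(h^{1+\al})$-isosceles with collinear bases while their bases and legs differ by $O(h)$, so the extra factor $h^{\al}$ is lost. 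What is needed (and what the paper's one-line proof encodes) is, first, the identity $\ga_e=\frac13\cot\ta_e\abs{\tau}=\frac16 h_{e+1}h_{e-1}\cos\ta_e=\frac1{12}\big(h_{e+1}^2+h_{e-1}^2-h_e^2\big)$, which turns \eqref{beta_gamma4} into a statement purely about edge lengths, and, second, a chaining of condition (a) through the patches of the successive interior edges emanating from $z_i$: each such patch is an $O(h^{1+\al})$ approximate parallelogram, so the edge lengths in the fan agree pairwise to $O(h^{1+\al})$ (the number of triangles in the fan is bounded by shape regularity), and combining this with condition (b) for the two boundary triangles shows that the edge lengths entering $\ga_{e_1}$ and $\ga_{e_2}$ match to $O(h^{1+\al})$, whence $\abs{\ga_{e_1}-\ga_{e_2}}\ls h^{2+\al}$. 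Even in the special corner configuration where the two boundary triangles do share an edge, your two isosceles relations alone do not control the base lengths $h_{e_1},h_{e_2}$; you would still need condition (a) applied to the patch of that shared interior edge.
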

\begin{proof}
The inequalities \eqref{beta_gamma1}--\eqref{beta_gamma3} follow from the \emph{$\al$ approximation condition}. From the condition (a) and (b) in Definition~\ref{meshcond}, we have
for any $e_1, e_2\in\E_h^B$ satisfying $e_1\cap e_2\neq\emptyset$ (cf. Figure~\ref{figNotb}),
\eqn{ \frac{\big|h_{e_1-1}h_{e_1+1}\cos\ta_{e_1}-h_{e_2-1}h_{e_2+1}\cos\ta_{e_2}\big|}{h} \ls h^{1+\al}, }
which implies \eqref{beta_gamma4}.

Finally, the inequalities \eqref{trcinq1} and \eqref{trcinq2} follow from the trace theorem.
\end{proof}

\begin{lemma} \label{uui}
Assume that $\T_h$ satisfies {the} \emph{$\al$ approximation ondition}. Then for any $v_h\in V_h$,
\eq{ \abs{\int_\Om\na(u-u_I)\cdot\na v_h}\ls \left((kh)^2+kh^{1+\al}\right)\norme{\na v_h}_{L^2(\Om)} C_{u,g}. }
Here $u_I$ is the linear interpolant of $u$ on $\Om$.
\end{lemma}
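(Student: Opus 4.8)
The plan is to start from the element-wise fundamental identity \eqref{fdmtliequ} applied with $\phi=u$, sum it over all $\tau\in\T_h$, and reorganize the resulting sum over element edges into a sum over interior edges $\E_h^I$ and a sum over boundary edges $\E_h^B$. In every edge term I would split the quadratic interpolant as $u_Q=u-(u-u_Q)$, so that each integral of the form $\int_e\frac{\pa^2 u_Q}{\pa\bt_e\pa\bm_e}\frac{\pa v_h}{\pa\bt_e}$ (with $\bm_e$ denoting $\bt_e$ or $\bn_e$) breaks into a \emph{smooth} piece carrying $\frac{\pa^2 u}{\pa\bt_e\pa\bm_e}$, to be controlled by \eqref{trcinq1}, and a \emph{remainder} piece carrying $\frac{\pa^2(u-u_Q)}{\pa\bt_e\pa\bm_e}$, to be controlled by \eqref{trcinq2}. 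The guiding principle is that the remainder pieces already gain a full $h^2$ from the coefficients $\be_e,\ga_e$ together with a factor $\abs{u}_3$ from \eqref{trcinq2}, which is enough; the smooth pieces are the dangerous ones, since \eqref{trcinq1} produces the term $h^{-1}\norm{u}_2$, so there I must extract an extra factor $h^\al$ from cancellation of the coefficients.

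For an interior edge $e$ shared by $\tau_e,\tau_e'$, I would use that for the smooth function $u$ the second derivatives $\frac{\pa^2 u}{\pa\bt_e^2}$ and $\frac{\pa^2 u}{\pa\bt_e\pa\bn_e}$ are single-valued across $e$, while $\frac{\pa v_h}{\pa\bt_e}$ is single-valued by continuity of $v_h$; together with the orientation relations $\bt_e'=-\bt_e$, $\bn_e'=-\bn_e$ this makes the two element contributions on $e$ combine into the coefficient differences, i.e.\ they appear multiplied by $\be_e-\be_e'$ and $\ga_e-\ga_e'$. By \eqref{beta_gamma2} these are $\ls h^{2+\al}$, so with \eqref{trcinq1} the smooth interior part is $\ls h^{2+\al}(\norm{u}_{H^3(\tau_e)}+h^{-1}\norm{u}_{H^2(\tau_e)})\norm{\na v_h}_{L^2(\tau_e)}$, giving $\ls h^{2+\al}\norm{u}_3+h^{1+\al}\norm{u}_2$ after summation. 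The remainder interior part is bounded edge-by-edge using $\abs{\be_e},\abs{\ga_e}\ls h^2$ from \eqref{beta_gamma1} and \eqref{trcinq2}, giving $\ls h^2\abs{u}_3$. Using $\norm{u}_2\le kC_{u,g}$ and $\norm{u}_3\le k^2C_{u,g}$ and a Cauchy--Schwarz step over the edge patches to factor out $\norm{\na v_h}_{L^2(\Om)}$, the interior edges contribute at the desired orders $kh^{1+\al}$ and $(kh)^2$.

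The boundary edges are where the real difficulty lies, and this is exactly what forces the extra mesh restrictions and the appearance of $\abs{g}_{H^j(\Ga)}$ in $C_{u,g}$. The $\be$-part is harmless: on a boundary edge $\abs{\be_e}\ls h^{2+\al}$ directly by \eqref{beta_gamma3}, so \eqref{trcinq1}--\eqref{trcinq2} close it just as in the interior. The $\ga$-part is the main obstacle: a boundary edge has no partner triangle for cancellation, and $\abs{\ga_e}$ is only $O(h^2)$, so the naive bound leaves the non-superconvergent order $kh$. To beat this I would first use the Robin condition \eqref{eq1.1b}, namely $\frac{\pa u}{\pa\bn_e}=g-\i k u$ on $e\subset\Ga$, to rewrite $\frac{\pa^2 u}{\pa\bt_e\pa\bn_e}=\frac{\pa g}{\pa\bt_e}-\i k\frac{\pa u}{\pa\bt_e}$, which converts the dangerous mixed derivative into tangential quantities and is the origin of the $g$-terms in $C_{u,g}$. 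Then, ordering the boundary edges consecutively along each straight side of $\Ga$ and treating the finitely many corners separately, I would perform a summation by parts that replaces each $\ga_{e_i}$ by the differences $\ga_{e_{i+1}}-\ga_{e_i}$, which are $\ls h^{2+\al}$ by \eqref{beta_gamma4}. This recovers the missing power of $h^\al$ and brings the boundary $\ga$-contribution down to the orders $kh^{1+\al}$ and $(kh)^2$ as well. I expect this telescoping on the boundary --- arranging the endpoint/partial-sum terms of $u$, $g$ and $\frac{\pa v_h}{\pa\bt_e}$ so they fit the $\norm{\na v_h}_{L^2(\Om)}$ bound without losing an inverse power of $h$ --- to be the technically delicate step; everything else is bookkeeping with the trace estimates of Lemma~\ref{parsestm}.
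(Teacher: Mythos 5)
Your treatment of the interior edges and of the harmless boundary terms (the $\be_e$-terms and the $\ga_e$-terms carrying $u-u_Q$) coincides exactly with the paper's estimates of its $I_1$ and $I_{2,1}$, and you correctly identify the critical term $\sum_{e\in\E_h^B}\ga_e\int_e\frac{\pa^2u}{\pa\bt_e\pa\bn_e}\frac{\pa v_h}{\pa\bt_e}$, the role of the Robin condition, and the need to exploit \eqref{beta_gamma4}. The gap is in \emph{how} you exploit it. Abel summation on the coefficient sequence $\set{\ga_{e_i}}$ leaves partial sums $S_i=\sum_{j\le i}\int_{e_j}\big(\frac{\pa g}{\pa\bt}-\i k\frac{\pa u}{\pa\bt}\big)\frac{\pa v_h}{\pa\bt}$ which still contain the tangential derivative of $v_h$ on $\Ga$, and for piecewise linears any bound of a boundary norm of $\na v_h$ by $\norm{\na v_h}_{L^2(\Om)}$ costs an inverse trace factor $h^{-1/2}$. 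Counting $O(h^{-1})$ consecutive differences of size $O(h^{2+\al})$, this route yields at best $h^{1+\al}\cdot k^{3/2}C_{u,g}\cdot h^{-1/2}\norm{\na v_h}_0 = k^{3/2}h^{1/2+\al}\norm{\na v_h}_0\,C_{u,g}$, and $k^{3/2}h^{1/2+\al}$ is \emph{not} bounded by $kh^{1+\al}+(kh)^2$ in the relevant regime (take $h\eqsim k^{-3/2}$, $\al=1$: the left side is $k^{-3/4}$, the right side is $\eqsim k^{-1}$). The same $h^{-1/2}$ loss reappears in every variant that keeps $\frac{\pa v_h}{\pa\bt_e}$ on the boundary, so the step you flag as ``technically delicate'' is not merely delicate: as formulated, it fails.

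The missing idea is the paper's maneuver \eqref{part20_I2}: integrate by parts \emph{within each boundary edge}, moving the tangential derivative onto $\frac{\pa^2u}{\pa\bt_e\pa\bn_e}$, so that the remaining boundary integral contains $v_h$ itself rather than $\frac{\pa v_h}{\pa\bt_e}$, while the endpoint contributions telescope at the boundary nodes into jump coefficients $\jm{\ga_e}_{z_i}\ls h^{2+\al}$ (non-corner nodes) plus finitely many corner terms. Then the bulk term is controlled without any inverse estimate via the multiplicative trace inequality, $\norm{v_h}_{L^2(\Ga)}\ls\norm{v_h}_0^{1/2}\norm{v_h}_1^{1/2}\ls k^{-1/2}\norme{v_h}$, giving the paper's $I_{2,2}$ and $I_{2,4}$ bounds of order $(kh)^2\norme{v_h}C_{u,g}$, while the nodal terms use the $h$-scale one-dimensional Sobolev inequality and \eqref{beta_gamma4} to give $I_{2,3}\ls kh^{1+\al}\norme{v_h}C_{u,g}$. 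Note that this fix forces the right-hand side to be the full energy norm $\norme{v_h}$ of \eqref{norme}: its $k\norm{v_h}_0$ component is precisely what absorbs the extra powers of $k$ generated on the boundary (the lemma's displayed $\norme{\na v_h}_{L^2(\Om)}$ is a typo for $\norme{v_h}$, as its use in Theorem~\ref{uhui} confirms). Your plan of closing the boundary estimate against $\norm{\na v_h}_{L^2(\Om)}$ alone therefore cannot work, independently of how the summation by parts is organized.
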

\begin{proof}
From \eqref{fdmtliequ}, we have
\eqn{
\int_\Om\na(u-u_I)\cdot\na v_h &= \sum_{\tau\in\T_h}\sum_{e\subset\pa\tau} \left( \be_e\int_e\frac{\pa^2 u_Q}{\pa \bt_e^2}\frac{\pa v_h}{\pa \bt_e} + \ga_e\int_e\frac{\pa^2 u_Q}{\pa \bt_e \pa \bn_e}\frac{\pa v_h}{\pa \bt_e} \right)\\
&= I_1+I_2,
}
where
\eqn{
&I_1 = \sum_{e\in\E_h^I} \left[ (\be_e-\be_e')\int_e\frac{\pa^2 u}{\pa t_e^2}\frac{\pa v_h}{\pa t_e} + (\ga_e-\ga_e')\int_e\frac{\pa^2 u}{\pa t_e \pa n_e}\frac{\pa v_h}{\pa t_e} \right. \\
&\qquad\qquad + \be_e\int_e\frac{\pa^2 (u_Q-u)}{\pa t_e^2}\frac{\pa v_h}{\pa t_e} + \ga_e \int_e\frac{\pa^2 (u_Q-u)}{\pa t_e \pa n_e}\frac{\pa v_h}{\pa t_e}\\
&\qquad\qquad \left. + \be_e'\int_e\frac{\pa^2 (u-u_Q)}{\pa t_e^2}\frac{\pa v_h}{\pa t_e} + \ga_e' \int_e\frac{\pa^2 (u-u_Q)}{\pa t_e \pa n_e}\frac{\pa v_h}{\pa t_e} \right],\\
&I_2 = \sum_{e\in\E_h^B} \left[ \be_e\int_e\frac{\pa^2 u}{\pa t_e^2}\frac{\pa v_h}{\pa t_e} + \ga_e\int_e\frac{\pa^2 u}{\pa t_e \pa n_e}\frac{\pa v_h}{\pa t_e} \right. \\
&\qquad\qquad \left. + \be_e\int_e\frac{\pa^2 (u_Q-u)}{\pa t_e^2}\frac{\pa v_h}{\pa t_e} + \ga_e \int_e\frac{\pa^2 (u_Q-u)}{\pa t_e \pa n_e}\frac{\pa v_h}{\pa t_e}\right].
}

First, $I_1$ can be estimated by Lemma~\ref{parsestm} and H\"{o}lder's inequality:
\eq{
\abs{I_1} &\ls \sum_{e\in\E_h^I} \left( (h^{2+\al}+h^2)\norm{u}_{H^{3}(\tau_e)} + h^{1+\al}\norm{u}_{H^{2}(\tau_e)} \right) \norm{\na v_h}_{L^2(\tau_e)} \label{I1est}\\
&\ls \left( (h^{2+\al}+h^2)\norm{u}_3 + h^{1+\al}\norm{u}_2 \right) \norm{\na v_h}_0\nn \\
&\ls \left( (kh)^{2} + kh^{1+\al} \right)\norm{\na v_h}_0  C_{u,g}.\nn
}

Next we estimate $I_2$. From \eqref{beta_gamma3} and \eqref{trcinq2},
\eq{
I_{2,1} &:= \sum_{e\in\E_h^B} \left[ \be_e\int_e\frac{\pa^2 u}{\pa t_e^2}\frac{\pa v_h}{\pa t_e} + \be_e\int_e\frac{\pa^2 (u_Q-u)}{\pa t_e^2}\frac{\pa v_h}{\pa t_e} + \ga_e \int_e\frac{\pa^2 (u_Q-u)}{\pa t_e \pa n_e}\frac{\pa v_h}{\pa t_e}\right]\label{part1_I2}\\
&\ls \sum_{e\in\E_h^B} \big( h^{1+\al}\norm{u}_{H^2(\tau_e)}+(h^{2+\al}+h^2)\norm{u}_{H^3(\tau_e)} \big) \norm{\na v_h}_{L^2(\tau_e)}\nn\\
&\ls \big( kh^{1+\al}+(kh)^{2} \big)\norm{\na v_h}_0  C_{u,g}.\nn
}
We turn to the estimate of the remaining terms of $I_2$. Denote by $z_i$ the nodes on $\Ga$. Let $e_1$ and $e_2$ be two boundary edges in $\E_h^B$ sharing $z_i$ with counterclockwise orientation (cf. Figure~\ref{figNotb}). Denote by $\jm{\ga_e}_{z_i}=\ga_{e_2}-\ga_{e_1}$ and by $\N_h^v$ the set of vertices of the domain $\Om$. Then we have
\eq{
\sum_{e\in\E_h^B}\ga_e\int_e\frac{\pa^2 u}{\pa t_e \pa n_e}\frac{\pa v_h}{\pa t_e} &=  -  \sum_{e\in\E_h^B}\ga_e\int_e\frac{\pa^3 u}{\pa t_e^2 \pa n_e}v_h + \sum_{z_i\in\Ga\bigcap\N_h\setminus\N_h^v} \jm{\ga_e}_{z_i} \frac{\pa^2 u}{\pa t_e \pa n_e}(z_i) v_h(z_i) \label{part20_I2} \\
& + \sum_{z_i\in\N_h^v} \bigg( \ga_{e_2} \frac{\pa^2 u}{\pa t_{e_2} \pa n_{e_2}}(z_i) v_h(z_i) - \ga_{e_1} \frac{\pa^2 u}{\pa t_{e_1} \pa n_{e_1}}(z_i) v_h(z_i) \bigg)\nn\\
&:= I_{2,2} + I_{2,3} + I_{2,4}.\nn
}

It is easy to get
\eq{
I_{2,2} &= \sum_{e\in\E_h^B}\ga_e\int_e\frac{\pa^3 u}{\pa t_e^2 \pa n_e}v_h \ls h^2 \abs{\frac{\pa u}{\pa n}}_{H^2(\Ga)} \norm{v_h}_{L^2(\Ga)}\label{part22_I2}\\
& \ls h^2 \big(  \abs{g}_{H^2(\Ga)} + k \abs{u}_{H^2(\Ga)} \big) \cdot \norm{v_h}_0^{1/2} \norm{v_h}_1^{1/2} \nn\\
& \ls k^{3/2} h^2  \norme{v_h} C_{u,g}. \nn
}

Suppose that $w\in H^1([a,b])$ and denote by $h_{ab}=b-a$, we have
\eqn{
w^2(b) &= \int_a^b \bigg( \frac{x-a}{b-a} w^2(x) \bigg)' \dx = \frac{1}{b-a} \int_a^b w^2 + 2\int_a^b \frac{x-a}{b-a}ww' \\
& \leq \frac{1}{h_{ab}} \norm{w}_{L^2([a,b])}^2 + 2\abs{w}_{H^1([a,b])} \norm{w}_{L^2([a,b])},
}
which implies
\eq{
I_{2,3}&\leq  \sum_{z_i\in\Ga\bigcap\N_h} \abs{\jm{\ga_e}_{z_i}} \bigg( \frac{1}{h_{e_i}} \abs{\frac{\pa u}{\pa n_{e_i}}}_{H^1(e_i)}^2 + 2\abs{\frac{\pa u}{\pa n_{e_i}}}_{H^2(e_i)}\abs{\frac{\pa u}{\pa n_{e_i}}}_{H^1(e_i)}\bigg)^{1/2} \label{part21_I2}\\
& \cdot \bigg( \frac{1}{h_{e_i}} \norm{v_h}_{L^2(e_i)}^2 + 2\abs{v_h}_{H^1(e_i)}\norm{v_h}_{L^2(e_i)} \bigg)^{1/2}\nn\\
&\ls \max_{z_i\in\Ga\bigcap\N_h} \abs{\jm{\ga_e}_{z_i}} \bigg( \frac{1}{h} \abs{\frac{\pa u}{\pa n}}_{H^1(\Ga)}^2 + \abs{\frac{\pa u}{\pa n}}_{H^2(\Ga)}\abs{\frac{\pa u}{\pa n}}_{H^1(\Ga)}\bigg)^{1/2}\nn\\
&\cdot \bigg( \frac{1}{h} \norm{v_h}_{L^2(\Ga)}^2 + \abs{v_h}_{H^1(\Ga)}\norm{v_h}_{L^2(\Ga)} \bigg)^{1/2}\nn\\
&\ls \frac{\max_{z_i\in\Ga\bigcap\N_h}\abs{\jm{\ga_e}_{z_i}}}{h} \bigg( \big(\abs{g}_{H^1(\Ga)}^2 + k\abs{u}_{H^1(\Ga)}^2\big) + h\big(\abs{g}_{H^2(\Ga)}+k\abs{u}_{H^2(\Ga)}\big) \cdot \nn\\
&\quad\big(\abs{g}_{H^1(\Ga)}+k\abs{u}_{H^1(\Ga)}\big) \bigg)^{1/2} \cdot \bigg( \norm{v_h}_{L^2(\Ga)}^2 + h \abs{v_h}_{H^1(\Ga)}\norm{v_h}_{L^2(\Ga)} \bigg)^{1/2}\nn\\
&\ls \frac{\max_{z_i\in\Ga\bigcap\N_h}\abs{\jm{\ga_e}_{z_i}}}{h} k^{3/2} \bigg( \norm{v_h}_0 \norm{v_h}_1 + h^{1/2} \norm{v_h}_1^{3/2}\norm{v_h}_0^{1/2}\bigg)^{1/2} C_{u,g}\nn\\
&\ls k \frac{\max_{z_i\in\Ga\bigcap\N_h}\abs{\jm{\ga_e}_{z_i}}}{h}  \norme{v_h} C_{u,g} \ls kh^{1+\al} \norme{v_h} C_{u,g},\nn
}
where we have used the second inequality in \eqref{beta_gamma3}.\

Since the number of the vertices of $\Om$ is a bounded constant independent of all the parameters $k$, $h$ and $\al$, from the trace theorem we have
\eq{
I_{2,4} &\ls \max_{z_i\in\N_h^v} \abs{\ga_e} \norm{\frac{\pa^2 u}{\pa t_{e} \pa n_{e}}}_{H^1(\Ga)}^{1/2} \norm{\frac{\pa^2 u}{\pa t_{e} \pa n_{e}}}_{L^2(\Ga)}^{1/2} \cdot \norm{v_h}_{H^{1/2}(\Ga)} \label{eq_I24}\\
& \ls h^2 \bigg(\norm{g}_{H^2(\Ga)}+k\norm{u}_{H^2(\Ga)}\bigg)^{1/2}\bigg(\norm{g}_{H^1(\Ga)}+k\norm{u}_{H^1(\Ga)}\bigg)^{1/2} \norm{v_h}_1 \nn\\
&\ls k^{3/2} h^2 \norme{v_h} C_{u,g}.\nn
}

Combining the inequalities \eqref{part1_I2}--\eqref{eq_I24} we have
\eq{ 
\abs{I_2} = \abs{I_{2,1}+I_{2,2}+I_{2,3}+I_{2,4}} \ls \big(kh^{1+\al}+(kh)^2\big) \norme{v_h} C_{u,g}.\label{eq_I2}
}

Finally, combining the estimates of $I_1$ and $I_2$ we complete the proof.
\end{proof}

Based on Lemma~\ref{uui}, we can obtain one of our main results in this paper.
\begin{theorem} \label{uhui}
Assume that $\T_h$ satisfies the \emph{$\al$ approximation condition}. There exists a constant $C_0$
independent of $k$ and $h$, such that if
\eq{k(kh)^2\leq C_0,}
we have
\eq{ \norme{u_h-u_I} \ls \big(kh^{1+\al}+k(kh)^2\big) C_{u,g}. \label{eq:th1eq1} }
\end{theorem}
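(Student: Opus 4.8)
The plan is to exploit Galerkin orthogonality to rewrite $u_h-u_I$ as being driven by the interpolation consistency error, to bound that error against $\norme{v_h}$ for all $v_h\in V_h$, and finally to convert this residual bound into an energy bound via the discrete stability of the Helmholtz form under $k(kh)^2\le C_0$. Write $B(w,v):=a(w,v)-k^2(w,v)+\i k\pd{w,v}$ for the sesquilinear form in \eqref{pre-var}--\eqref{fem}, and set $\xi:=u_h-u_I\in V_h$. Subtracting \eqref{fem} from \eqref{pre-var} tested against $v_h\in V_h$ gives $B(u-u_h,v_h)=0$, so that
\begin{equation*}
B(\xi,v_h)=B(u_h-u,v_h)+B(u-u_I,v_h)=B(u-u_I,v_h)\qquad\forall v_h\in V_h .
\end{equation*}
Thus it suffices to estimate the three pieces of $B(u-u_I,v_h)$ and then to turn a bound on $\sup_{v_h}|B(\xi,v_h)|/\norme{v_h}$ into a bound on $\norme{\xi}$.

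For the gradient term $a(u-u_I,v_h)$, Lemma~\ref{uui} gives directly $|a(u-u_I,v_h)|\ls(kh^{1+\al}+(kh)^2)C_{u,g}\norm{\na v_h}_0$, and $\norm{\na v_h}_0\le\norme{v_h}$. For the volume term, the interpolation estimate $\norm{u-u_I}_0\ls h^2\abs{u}_2$ with $\abs{u}_2\ls kC_{u,g}$ and $k\norm{v_h}_0\le\norme{v_h}$ yields
\begin{equation*}
k^2|(u-u_I,v_h)|\ls k^2h^2\abs{u}_2\,\norm{v_h}_0\ls (kh)^2C_{u,g}\norme{v_h}.
\end{equation*}
The genuinely new term is the impedance term $\i k\pd{u-u_I,v_h}$, which cannot be discarded because $u_h\neq u_I$ on $\Ga$. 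Here I would use the edgewise interpolation bound $\norm{u-u_I}_{L^2(\Ga)}\ls h^2\norm{u}_{H^2(\Ga)}\ls h^2\norm{u}_3\ls h^2k^2C_{u,g}$ (via the trace $H^3(\Om)\hookrightarrow H^2(\Ga)$) together with the multiplicative trace inequality $\norm{v_h}_{L^2(\Ga)}^2\ls\norm{v_h}_0\norm{v_h}_1\ls k^{-1}\norme{v_h}^2$, so that $k|\pd{u-u_I,v_h}|\ls k^{1/2}(kh)^2C_{u,g}\norme{v_h}$. Since $k\ge1$, all three contributions are dominated by $(kh^{1+\al}+k(kh)^2)C_{u,g}$, giving the residual bound $|B(\xi,v_h)|\ls(kh^{1+\al}+k(kh)^2)C_{u,g}\norme{v_h}$ for all $v_h\in V_h$.

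To pass from this residual bound to $\norme{\xi}\ls(kh^{1+\al}+k(kh)^2)C_{u,g}$, I would invoke the discrete stability (inf--sup condition) of $B$ on $V_h$ valid precisely under $k(kh)^2\le C_0$, which is the content of the preasymptotic analysis in \cite{zw,dw}: there is a constant $C_0$ with $\norme{w_h}\ls\sup_{0\ne v_h\in V_h}|B(w_h,v_h)|/\norme{v_h}$ for all $w_h\in V_h$. Applying this to $w_h=\xi$ and using the residual bound closes the proof. Mechanistically this stability rests on the G\aa rding identity $\re B(\xi,\xi)=\norm{\na\xi}_0^2-k^2\norm{\xi}_0^2$, whence $\norme{\xi}^2\ls|B(\xi,\xi)|+k^2\norm{\xi}_0^2$, combined with an Aubin--Nitsche duality against the adjoint problem $-\De w-k^2w=\xi$, $\pa_nw-\i kw=0$ to absorb $k^2\norm{\xi}_0^2$; the mesh condition $k(kh)^2\le C_0$ is exactly what renders the pollution part of this duality a small multiple of $\norme{\xi}^2$.

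I expect the boundary term to be the main obstacle. Unlike the Dirichlet case (Remark~2.1) one cannot ignore $\E_h^B$, and keeping the optimal order $h^2$ there---rather than the $h^{3/2}$ a crude trace of $u-u_I$ would give---is essential, since only the $h^2$ boundary bound stays below $k(kh)^2$ for all admissible $h$. The remaining delicate point is the bookkeeping of powers of $k$: each term must be shown to be controlled by $kh^{1+\al}+k(kh)^2$ after using $\abs{u}_2\ls kC_{u,g}$, $\norm{u}_3\ls k^2C_{u,g}$ and $k\ge1$.
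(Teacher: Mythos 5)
Your reduction to the residual bound $\abs{B(u-u_I,v_h)}\ls\big(kh^{1+\al}+k(kh)^2\big)C_{u,g}\norme{v_h}$ is sound and matches the paper's estimates term by term (Lemma~\ref{uui} for the gradient part, the $L^2$ interpolation bound for the volume part, and a trace argument as in \eqref{eqend} for the impedance part). The gap is in your final step: the discrete stability you invoke, $\norme{w_h}\ls\sup_{0\ne v_h\in V_h}\abs{B(w_h,v_h)}/\norme{v_h}$ with a constant independent of $k$, is false. For the Helmholtz impedance form the inf--sup constant in the norm $\norme{\cdot}$ --- continuous or discrete, resolved mesh or not --- degrades like $k^{-1}$, and this is sharp: if $u$ solves $B(u,v)=(f,v)$ for all $v$ with $\norm{f}_0=1$, then $\sup_v\abs{B(u,v)}/\norme{v}\le\sup_v\norm{v}_0/\norme{v}\le k^{-1}$, while $\norme{u}$ is generically of order one. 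With the correct inf--sup constant your argument only yields $\norme{u_h-u_I}\ls\big(k^2h^{1+\al}+k^2(kh)^2\big)C_{u,g}$, which is weaker than \eqref{eq:th1eq1} by a factor of $k$ and useless for superconvergence. Your own sketch of the stability mechanism exposes the same problem: the duality step gives $\norm{\xi}_0\ls kh\,\norme{\xi}+\cdots$, so absorbing $k^2\norm{\xi}_0^2$ into $\norme{\xi}^2$ requires $k^2h$ small, a strictly stronger requirement than $k(kh)^2\le C_0$ (under which $k^2h$ may be as large as $\sqrt{C_0k}$), and even then the residual term reappears multiplied by $k$.

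The paper avoids any inf--sup argument. It tests with the specific choice $v_h=u_h-u_I$, writes $\norme{u_h-u_I}^2=\Re\big(a(u_h-u_I,v_h)+k^2(u_h-u_I,v_h)\big)$, adds and subtracts terms so that Galerkin orthogonality replaces $u_h$ by $u$ in the indefinite form, and is then left with the single extra term $2k^2(u_h-u_I,v_h)$. This is bounded by $2\big(k\norm{u-u_h}_0+k\norm{u-u_I}_0\big)\,k\norm{v_h}_0$, and the crucial input is the preasymptotic $L^2$ estimate of Lemma~\ref{lemma1}, namely $k\norm{u-u_h}_0\ls\big((kh)^2+k(kh)^2\big)C_{u,g}$ under $k(kh)^2\le C_0$: because this bound is second order in $h$, no factor of $k$ is lost, and one concludes $\norme{v_h}^2\ls\big(kh^{1+\al}+k(kh)^2\big)C_{u,g}\norme{v_h}$. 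If you wish to keep your framework, the repair is exactly this: do not pass through a generic stability constant for $B$ on $V_h$, but use the known $L^2$ error estimate for $u-u_h$ (which already encodes the stability analysis of \cite{zw,dw}) to control the indefinite volume term directly.
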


\begin{proof}
For simplicity of presentation, we denote $v_h=u_h-u_I$. By the definition~\eqref{norme} and the Galerkin orthogonality, we have
\eq{
&\norme{u_h-u_I}^2 = \Re \left( a(u_h-u_I,v_h) + k^2(u_h-u_I,v_h) \right) \label{eq1}\\
&= \Re \left( a(u_h-u_I,v_h) - k^2(u_h-u_I,v_h) + \i k\pd{u_h-u_I,v_h} + 2k^2(u_h-u_I,v_h) \right)  \nn\\
&= \Re \left( a(u-u_I,v_h) - k^2(u-u_I,v_h) + \i k\pd{u-u_I,v_h} + 2k^2(u_h-u_I,v_h) \right).\nn
}
It is well known that
\eq{ k\norm{u-u_I}_{L^2(\Om)} \ls kh^2\norm{u}_{H^2(\Om)}.}
From Lemma~\ref{lemma1}, we know that there exists a constant $C_0$ such that if $k(kh)^2\leq C_0$,
the following inequality holds,
\eqn{ k\norm{u-u_h}_{L^2(\Om)} \ls \left((kh)^2+k(kh)^2\right) C_{u,g}. }
Then we have
\eq{2k^2(u_h-u_I,v_h)&\leq 2k\norm{u_h-u_I}_{L^2(\Om)}\cdot k\norm{v_h}_{L^2(\Om)} \\
&\leq 2\left(k\norm{u-u_h}_{L^2(\Om)}+k\norm{u-u_I}_{L^2(\Om)}\right)\cdot k\norm{v_h}_{L^2(\Om)}   \nn\\
&\ls  \left((kh)^2+k(kh)^2\right) \norme{v_h} C_{u,g}.\nn
}
On the other hand, by the trace inequality,
\eq{
\abs{k\pd{u-u_I,v_h}} &\leq k\norm{u-u_I}_{L^2(\pa\Om)}\norm{v_h}_{L^2(\pa\Om)} \label{eqend}   \\
&\ls  kh^2\norm{u}_{H^2(\pa\Om)}\norm{v_h}_{L^2(\pa\Om)}   \nn\\
&\ls k^{1/2}h^2\norm{u}^{1/2}_{H^2(\Om)}\norm{u}^{1/2}_{H^3(\Om)}\norme{v_h}    \nn\\
&\ls (kh)^2 \norme{v_h} C_{u,g}.\nn
}
Therefore, if $k(kh)^2\leq C_0$, by Lemma~\ref{uui} and \eqref{eq1}--\eqref{eqend}, we have
\eqn{
\norme{u_h-u_I}^2 &\leq \abs{a(u-u_I,v_h)} + \abs{k^2(u-u_I,v_h)}\nn\\
&\qquad + \abs{k\pd{u-u_I,v_h}} + \abs{2k^2(u_h-u_I,v_h)} \\
&\ls \left((kh)^2+kh^{1+\al}\right)\norme{v_h} C_{u,g} + (kh)^2 \norme{v_h} C_{u,g} \nn\\
&\qquad+ \left((kh)^2+k(kh)^2\right) \norme{v_h} C_{u,g} \nn\\
&\ls \left(kh^{1+\al}+(kh)^2+k(kh)^2\right)\norme{v_h} C_{u,g}. \nn\\
&\ls \left(kh^{1+\al}+k(kh)^2\right)\norme{v_h} C_{u,g}. \nn
}
This completes the proof.
\end{proof}

\section{The gradient recovery operator $G_h$ and its superconvergence} \label{gra}
In this section, we apply a gradient recovery operator developed in 2004, called polynomial preserving
recovery (PPR) \cite{nz04,z04,zn05}, to improve the finite element solution.
We first introduce the gradient recovery operator $G_h:C(\Om)\mapsto V_h\times V_h$.
Given a node $z\in\N_h$, we select $n\geq6$ sampling points $z_j\in\N_h$, $j=1,2,\cdots,n$, in an element
patch $\om_z$ containing $z$ ($z$ is one of $z_j$) and fit a polynomial of degree $2$, in the least
squares sense, with values of $u_h$ at those sampling points. First, we find $p_2\in P_2(\om_z)$ for some
$w\in C(\Om)$ such that
\eq{ \sum_{j=1}^n(p_2-w)^2(z_j)=\min_{q\in P_2}\sum_{j=1}^n(q-w)^2(z_j). }
Here $P_2(\om_z)$ is the well-known piecewise quadratic polynomial space defined on $\om_z$.
The recovery gradient at $z$ is then defined as
\eq{ G_hw(z)=(\na p_2)(z). }

For the linear element, the above least squares fitting procedure has a unique
solution as long as those $n$ sampling points are not on the same conic curve \cite{nz04}.

Now we show some properties of the gradient recovery operator $G_h$:
\begin{itemize}
  \item[(i)] $\norm{G_hv_h}_0\ls\norm{\na v_h}_0\quad \forall v_h\in V_h$.
  \item[(ii)] {For} any nodal point $z$, $(G_hp)(z)=\na p(z)$ if $p\in P_j(\om_z),\ j=1,2$.
  \item[(iii)] $G_hw=G_hI_h^jw\quad\forall w\in C(\Om),\ j=1,2$.
\end{itemize}
Here $I_h^jw(j=1,2)$ are the linear nodal value interpolant and quadratic nodal value interpolant of $w$, respectively. The reader is referred to \cite{nz04,wz07,z04,zn05} for more details of these properties.

From (i), we know that
\eq{ \norm{G_hu_h-\na u}_0 &\leq \norm{G_hu_h-G_hu_I} + \norm{G_hu_I-\na u}_0 \label{ghuh}\\
&\ls \norm{\na(u_h-u_I)}_0 + \norm{G_hu_I-\na u}_0.\nn}
Here $u_I$ is the linear interpolant of $u$. The estimate for the first term of the
right hand side of the inequality \eqref{ghuh} follows from Theorem~\ref{uhui}.
Next we will estimate the second term.

\begin{lemma}\label{ghphi}
For any element $\tau\in\T_h$ and any function $\phi\in H^3(\tilde\tau)$,
\eq{ \norm{G_h\phi_I-\na \phi}_{L^2(\tau)}\ls h^2\norm{\phi}_{H^3(\tilde\tau)}, }
where $\tilde\tau=\bigcup\set{\om_z:z\in\N_h\cap\tau}$ and $\phi_I$ is the linear interpolant of $\phi$.
\end{lemma}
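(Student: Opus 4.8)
The plan is to compare $G_h\phi_I$ with $\na\phi$ through an intermediate quadratic polynomial, exploiting the fact that $G_h$ reproduces gradients of quadratics exactly. Since $\phi\in H^3(\tilde\tau)$ and we work in two dimensions, $\phi$ is continuous on $\tilde\tau$, so $\phi_I=I_h^1\phi$ is well defined and, by property (iii), $G_h\phi_I=G_h\phi$ on $\tau$. On the patch $\tilde\tau$ I would fix an averaged Taylor polynomial $p\in P_2(\tilde\tau)$ of $\phi$, so that the Bramble--Hilbert lemma supplies $|\phi-p|_{H^m(\tilde\tau)}\ls h^{3-m}|\phi|_{H^3(\tilde\tau)}$ for $m=0,1,2$.

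The crux is the reproduction identity $G_h p_I=\na p$ on $\tau$. By property (ii), $(G_h p)(z)=(\na p)(z)$ at every node $z$ (here $p|_{\om_z}\in P_2(\om_z)$), and by property (iii), $G_h p=G_h I_h^1 p=G_h p_I$; hence $(G_h p_I)(z)=(\na p)(z)$ at every nodal point of $\tau$. Now $G_h p_I|_\tau$ lies in $P_1(\tau)\times P_1(\tau)$ (it is the restriction of an element of $V_h\times V_h$) and $\na p|_\tau$ is also affine because $p$ is quadratic; two affine vector fields agreeing at the three vertices of $\tau$ coincide on $\tau$, so $G_h p_I=\na p$ on $\tau$.

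With this in hand I would decompose, on $\tau$,
\eqn{ G_h\phi_I-\na\phi = G_h(\phi_I-p_I) + (\na p-\na\phi), }
using $\phi_I-p_I=(\phi-p)_I$ by linearity of interpolation. The second term is bounded directly: $\norm{\na p-\na\phi}_{L^2(\tau)}\le|\phi-p|_{H^1(\tilde\tau)}\ls h^2|\phi|_{H^3(\tilde\tau)}$. For the first term I would invoke the local counterpart of the stability property (i), namely $\norm{G_h(\phi_I-p_I)}_{L^2(\tau)}\ls\norm{\na(\phi_I-p_I)}_{L^2(\tilde\tau)}$, which holds because the recovered gradient on $\tau$ depends only on nodal values inside $\tilde\tau$. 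Writing $\psi=\phi-p$, a triangle inequality together with the standard linear interpolation estimate gives $\norm{\na\psi_I}_{L^2(\tilde\tau)}\le\norm{\na(\psi_I-\psi)}_{L^2(\tilde\tau)}+\norm{\na\psi}_{L^2(\tilde\tau)}\ls h|\psi|_{H^2(\tilde\tau)}+|\psi|_{H^1(\tilde\tau)}\ls h^2|\phi|_{H^3(\tilde\tau)}$, where both Bramble--Hilbert bounds are used. Summing the two contributions yields the claim.

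The main obstacle is not the estimation, which is routine once the correct quadratic has been subtracted, but rather establishing the reproduction identity $G_h p_I=\na p$ on $\tau$ cleanly and securing the local stability bound $\norm{G_h(\cdot)}_{L^2(\tau)}\ls\norm{\na(\cdot)}_{L^2(\tilde\tau)}$ as a genuinely local version of property (i). The latter requires that the number of sampling points, the patch geometry, and the least-squares weights be uniformly controlled, which is precisely where the regularity of $\T_h$ and the construction of $G_h$ enter.
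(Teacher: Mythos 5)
Your proof is correct and is essentially the paper's own argument: both insert a quadratic $P_2(\tilde\tau)$ approximation furnished by the Bramble--Hilbert lemma, both establish the reproduction identity $G_h\eta=\na\eta$ on $\tau$ from property (ii) together with the fact that both sides are affine on $\tau$, and both finish with a local $L^2$-stability bound for $G_h$ (which the paper derives exactly as you indicate, via $\norm{G_hw}_{L^2(\tau)}\ls h\max_{z\in\N_h\cap\tau}\abs{G_hw(z)}\ls h\norm{\na w}_{L^\infty(\tilde\tau)}\ls\norm{\na w}_{L^2(\tilde\tau)}$, the last step being an inverse estimate) combined with interpolation error estimates. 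The only difference is bookkeeping: you carry the linear interpolant $(\phi-p)_I$ through the stability step, whereas the paper first replaces $\phi_I$ by the quadratic interpolant $I_h^2\phi$ using property (iii) and then invokes $\norm{\na(I_h^2\phi-\phi)}_{L^2(\tilde\tau)}\ls h^2\norm{\phi}_{H^3(\tilde\tau)}$ together with the infimum over $\eta\in P_2(\tilde\tau)$.
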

\begin{proof}
By the property (iii),
\eq{ \norm{G_h\phi_I-\na \phi}_{L^2(\tau)} = \norm{G_h\phi-\na \phi}_{L^2(\tau)}=\norm{G_hI_h^2\phi-\na \phi}_{L^2(\tau)}. \label{Gh-eq0} }
For any $\eta\in P_{2}(\tilde\tau)$, from the property (ii) and the fact that $G_h\eta\in V_h\times V_h$ and
$\na\eta\in P_1(\tilde\tau)\times P_1(\tilde\tau)$, it is easy to get that $G_h\eta=\na\eta$ in $\tau$, which implies
\eq{ \norm{G_hI_h^2\phi-\na \phi}_{L^2(\tau)} &= \norm{G_h(I_h^2\phi-\eta)-\na (\phi-\eta)}_{L^2(\tau)} \label{Gh-eq1}\\
&\leq \norm{G_h(I_h^2\phi-\eta)}_{L^2(\tau)}+\norm{\na (\phi-\eta)}_{L^2(\tau)}.\nn }
By the definition and properties of $G_h$,
\eq{ \norm{G_h(I_h^2\phi-\eta)}_{L^2(\tau)} &\ls h \max_{z\in \N_h\cap\tau} \abs{G_h(I_h^2\phi-\eta)(z)} \ls
h \norm{\na(I_h^2\phi-\eta)}_{L^\infty(\tilde\tau)} \label{Gh-eq1t}\\
& \ls \norm{\na(I_h^2\phi-\eta)}_{L^2(\tilde\tau)} \ls  \norm{\na(I_h^2\phi-\phi)}_{L^2(\tilde\tau)} + \norm{\na(\phi-\eta)}_{L^2(\tilde\tau)}.\nn }
Then, from \eqref{Gh-eq0}--\eqref{Gh-eq1t} we have
\eq{\label{Gh-eq2} \norm{G_hI_h^2\phi-\na \phi}_{L^2(\tau)} \ls \inf_{\eta\in P_2(\tilde\tau)} \norm{\na (\phi-\eta)}_{L^2(\tilde\tau)} +
\norm{\na(I_h^2\phi-\phi)}_{L^2(\tilde\tau)}. }
By {the} Hilbert--Bramble lemma and the scaling argument,
\eq{\label{Gh-eq3} \inf_{\eta\in P_2(\tilde\tau)}\norm{\na (\phi-\eta)}_{L^2(\tilde\tau)} \ls h^2\norm{\phi}_{H^3(\tilde\tau)}, }
and from the approximation theory
\eq{\label{Gh-eq4} \norm{\na(I_h^2\phi-\phi)}_{L^2(\tilde\tau)}\ls h^2 \norm{\phi}_{H^3(\tilde\tau)}. }
The proof is completed by combining \eqref{Gh-eq0} {with} \eqref{Gh-eq2}--\eqref{Gh-eq4}
\end{proof}

The following theorem is devoted to the estimate of the second term of \eqref{ghuh}. In fact, it
shows the superconvergence property of $G_h$ in $H^3(\Om)$ space for the linear element.
\begin{theorem} \label{main0}
We have the following estimate:
\eq{ \norm{G_hu_I-\na u}_0\ls (kh)^2  C_{u,g}. \label{eq:th0eq1} }
\end{theorem}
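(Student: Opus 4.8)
The plan is to reduce the global estimate to the local superconvergence bound already established in Lemma~\ref{ghphi} and then sum over the triangulation. First I would apply Lemma~\ref{ghphi} with $\phi=u$ on each element $\tau\in\T_h$, which gives $\norm{G_hu_I-\na u}_{L^2(\tau)}\ls h^2\norm{u}_{H^3(\tilde\tau)}$, where $\tilde\tau=\bigcup\set{\om_z:z\in\N_h\cap\tau}$. Squaring and summing over all $\tau$ yields
\eqn{ \norm{G_hu_I-\na u}_0^2 = \sum_{\tau\in\T_h}\norm{G_hu_I-\na u}_{L^2(\tau)}^2 \ls h^4\sum_{\tau\in\T_h}\norm{u}_{H^3(\tilde\tau)}^2. }

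Second, I would control the sum of the patch norms by the global $H^3$-norm. Because each $\tilde\tau$ consists of a fixed, bounded number of element patches $\om_z$ and, by the shape regularity of $\T_h$ (the assumption $h_\tau\eqsim h$), any given element of $\T_h$ belongs to only a uniformly bounded number of the enlarged patches $\tilde\tau$, the collection $\set{\tilde\tau}_{\tau\in\T_h}$ has finite overlap. Hence $\sum_{\tau\in\T_h}\norm{u}_{H^3(\tilde\tau)}^2\ls\norm{u}_{H^3(\Om)}^2=\norm{u}_3^2$, and therefore $\norm{G_hu_I-\na u}_0\ls h^2\norm{u}_3$.

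Finally, I would rewrite this in terms of $C_{u,g}$ using its definition. Since $C_{u,g}=\sum_{j=1}^3 k^{-(j-1)}\norm{u}_j+\sum_{j=1}^2 k^{-j}\abs{g}_{H^j(\Ga)}$ contains the term $k^{-2}\norm{u}_3$, we have $\norm{u}_3\le k^2 C_{u,g}$, so that $h^2\norm{u}_3\le (kh)^2 C_{u,g}$, which is precisely \eqref{eq:th0eq1}.

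I expect the only nontrivial point to be the finite-overlap bookkeeping in the second step: one must verify that the enlarged patches $\tilde\tau$ cover each element a uniformly bounded number of times, so that assembling the local $H^3(\tilde\tau)$ contributions does not introduce a factor depending on $h$ or on the number of elements. This is a routine consequence of mesh regularity and the bounded size of each recovery patch $\om_z$, but it is the step that warrants care; everything else follows directly from Lemma~\ref{ghphi} and the definition of $C_{u,g}$.
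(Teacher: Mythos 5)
Your proposal is correct and follows essentially the same route as the paper's own proof: apply Lemma~\ref{ghphi} elementwise with $\phi=u$, sum the squared local bounds, absorb the patch norms into $\norm{u}_3$ via finite overlap, and conclude with $\norm{u}_3\le k^2C_{u,g}$. The finite-overlap step you flag is indeed the only point needing care, and the paper passes over it silently; your explicit justification via mesh regularity is exactly what fills that in.
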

\begin{proof} From Lemma~\ref{ghphi} we have
\eqn{ \norm{G_hu_I-\na u}_0 &= \left(\sum_{\tau\in\M_h}\norm{G_hu_I-\na u}^2_{L^2(\tau)}\right)^{1/2} \ls h^2 \left(\sum_{\tau\in\M_h}\norm{u}^2_{H^3(\tilde\tau)}\right)^{1/2}  \\
&\ls h^2 \norm{u}_3 \ls (kh)^2 C_{u,g}. }
\end{proof}

The following preasymptotic superconvergence estimate of the gradient recovery operator $G_h$
can be proved by combining \eqref{ghuh}, Theorem~\ref{uhui} and Theorem~\ref{main0}.
\begin{theorem} \label{main1}
Assume that $\T_h$ satisfies the \emph{$\al$ approximation condition}.
Let $u$ and $u_h$ be the solutions to \eqref{eq1.1a}--\eqref{eq1.1b} and \eqref{fem}, respectively.
Then there exists a constant $C_0$ independent of $k$ and $h$ such that if $k(kh)^2\leq C_0$,
\eq{ \norm{G_hu_h-\na u}_0\ls \big(kh^{1+\al}+k(kh)^2\big)  C_{u,g}. \label{ghpoll} }
\end{theorem}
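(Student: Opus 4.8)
The plan is to assemble the estimate \eqref{ghpoll} directly from the triangle-inequality decomposition \eqref{ghuh}, combined with the two superconvergence estimates already established in Theorem~\ref{uhui} and Theorem~\ref{main0}. Essentially all of the substantive work has already been carried out in proving those two results, so the remaining task is a clean combination together with some elementary bookkeeping of the error orders.

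First I would recall the split furnished by \eqref{ghuh}, namely $\norm{G_hu_h-\na u}_0\ls\norm{\na(u_h-u_I)}_0+\norm{G_hu_I-\na u}_0$, which is obtained by inserting $\pm G_hu_I$, applying the triangle inequality, and using the $L^2$-stability property (i) of $G_h$ on $u_h-u_I\in V_h$. This reduces the theorem to bounding the two terms on the right separately. For the first term I would note that $\norm{\na(u_h-u_I)}_0\le\norme{u_h-u_I}$ by the definition \eqref{norme} of the weighted norm, so that Theorem~\ref{uhui}—valid under the $\al$ approximation condition and the mesh constraint $k(kh)^2\le C_0$—gives $\norm{\na(u_h-u_I)}_0\ls\big(kh^{1+\al}+k(kh)^2\big)C_{u,g}$. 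For the second term, Theorem~\ref{main0} yields the recovery-consistency bound $\norm{G_hu_I-\na u}_0\ls(kh)^2C_{u,g}$ directly.

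Combining these two bounds finishes the argument, the only minor point being to absorb the interpolation-recovery contribution $(kh)^2$ into the pollution term: since $k\gtrsim1$ we have $(kh)^2\le k(kh)^2$, so $(kh)^2C_{u,g}$ is dominated by the right-hand side of \eqref{ghpoll}. I do not expect any genuine obstacle at this stage; the difficulty was entirely contained in the superconvergence estimate of Theorem~\ref{uhui}—in particular the delicate treatment of the boundary-edge terms $I_2$ arising from the Robin condition, which cannot be ignored as in the Dirichlet case—and in the $H^3$ recovery estimate of Theorem~\ref{main0}. This final theorem merely records their sum, and its conceptual payoff is the observation it makes transparent: the PPR step improves the interpolation order of the gradient error from $kh$ to $kh^{1+\al}$ while leaving the pollution term $k(kh)^2$ unchanged.
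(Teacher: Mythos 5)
Your proposal is correct and follows precisely the route the paper itself takes: the paper proves Theorem~\ref{main1} by exactly the combination of the decomposition \eqref{ghuh} (via linearity and the stability property (i) of $G_h$), Theorem~\ref{uhui} for $\norm{\na(u_h-u_I)}_0\le\norme{u_h-u_I}$, and Theorem~\ref{main0}, with the $(kh)^2$ contribution absorbed into $k(kh)^2$ since $k\gg1$. There is nothing missing; your bookkeeping of the error orders and of where the mesh condition $k(kh)^2\le C_0$ enters is exactly the paper's argument.
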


\emph{Remark} 4.1. From~Lemma \ref{lemma1} we know that
\eq{ \norm{\na u_h-\na u}_0 \ls \big(kh+k(kh)^2\big) C_{u,g}. \label{fempoll} }
We can find that the pollution error of the finite element solution is $C_1k(kh)^2$ from
\eqref{fempoll} and that of the gradient recovery operator $G_h$ is $C_2k(kh)^2$ from \eqref{ghpoll}, where
$C_1$ and $C_2$ are two constants independent of $k$ and $h$.
It is interesting that the orders of these pollution errors are the same.
It seems that the gradient recovery operator does not reduce the pollution error based on our analysis.
Indeed, our numerical tests in section~\ref{num} indicate that the pollution error is the same with or without the gradient recovery.
In the next section, We will provide some explanation.

\section{The estimate between $G_hu_h$ and $\na u_h$} \label{est}
In this section, we devote estimating the norm $\norm{G_hu_h-\na u_h}_0$, which motivate us to
combine the PPR method and the Richardson extrapolation and define the a posteriori error estimator shown in the subsection~\ref{sec_richardson_posteriori}.
First, we define an elliptic projection $P_h:V\rightarrow V_h$:
find $P_hu\in V_h$ such that
\eq{ a(P_hu,v_h)+\i k\pd{P_hu,v_h}=a(u,v_h)+\i k\pd{u,v_h}\quad \forall v_h\in V_h.\label{ellpro}}
In other words, the elliptic projection $P_hu$ of $u$ is the finite element approximation
to the solution of the following (complex-valued) Poisson problem:
\eq{ -\De u &= F \quad \mathrm{in} \quad \Om,\label{eq2.2a}\\
\frac{\pa u}{\pa n} + \i ku &= g \quad \mathrm{on}\quad \Ga,\label{eq2.2b}  }
for some given function $F$ which are determined by $u$.
This kind of elliptic projection is often used to study some properties,
such as stability and convergence, of the FEM for the Helmholtz problem.  Readers are referred to \cite{zw,zd,dzh,dw}.

\begin{lemma} \label{ellemm}
Assume that $u$ is $H^2$-regular. $u_h^+$ is its elliptic projection defined by \eqref{ellpro}.
There hold the following estimates:
\eq{
\norme{u-P_hu} &\ls \inf_{v_h\in V_h} \norme{u-v_h}, \label{ellemm-eq1}\\
\norm{u-P_hu}_{L^2(\Om)} &\ls h\inf_{v_h\in V_h}\norme{u-v_h}.\label{ellemm-eq2}
}
\end{lemma}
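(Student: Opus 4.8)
The plan is to view \eqref{ellpro} as the conforming finite element discretization of the impedance--Poisson problem \eqref{eq2.2a}--\eqref{eq2.2b} and to run the standard Schatz/Aubin--Nitsche duality argument in a $k$-explicit form, exactly as for the Helmholtz projections in \cite{zw,dw,dzh}. Write $B(w,v):=a(w,v)+\i k\pd{w,v}$ for the form in \eqref{ellpro} and set $e:=u-P_hu$. Two facts drive everything: the Galerkin orthogonality $B(e,v_h)=0$ for all $v_h\in V_h$, and the elementary identities $\re B(v,v)=\norm{\na v}_0^2$ and $\im B(v,v)=k\norm{v}_{L^2(\Ga)}^2$, the first of which is a G\aa{}rding inequality since $\norm{\na v}_0^2=\norme{v}^2-k^2\norm{v}_0^2$.

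First I would extract the gradient and boundary parts of $\norme{e}$. Using orthogonality to write $B(e,e)=B(e,u-v_h)$ for an arbitrary $v_h\in V_h$, taking real and imaginary parts, and applying the bound $\abs{B(e,u-v_h)}\le\norm{\na e}_0\norm{\na(u-v_h)}_0+k\norm{e}_{L^2(\Ga)}\norm{u-v_h}_{L^2(\Ga)}$ followed by Young's inequality, I obtain
\[
\norm{\na e}_0^2+k\norm{e}_{L^2(\Ga)}^2\ls\norm{\na(u-v_h)}_0^2+k\norm{u-v_h}_{L^2(\Ga)}^2\ls\norme{u-v_h}^2,
\]
where the last step uses the multiplicative trace inequality $\norm{w}_{L^2(\Ga)}^2\ls\norm{w}_0\norm{w}_1$ together with $k\ge1$. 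This controls $\norm{\na e}_0$ and $k^{1/2}\norm{e}_{L^2(\Ga)}$ by $\inf_{v_h}\norme{u-v_h}$, and leaves only the interior term $k\norm{e}_0$.

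The $L^2$-estimate \eqref{ellemm-eq2} is the heart of the matter, and I would get it by duality. Introduce the adjoint problem: find $w\in H^1(\Om)$ with $B(v,w)=(v,e)$ for all $v$, i.e.\ $-\De w=e$ in $\Om$ and $\pa w/\pa n-\i kw=0$ on $\Ga$. Since $\Om$ is star-shaped this problem is $H^2$-regular with the $k$-uniform bound $\norm{w}_2\ls\norm{e}_0$. Choosing $v=e$, using orthogonality against the Lagrange interpolant $w_h$ of $w$, and the interpolation estimates $\norm{\na(w-w_h)}_0\ls h\norm{w}_2$ and $\norm{w-w_h}_{L^2(\Ga)}\ls h^{3/2}\norm{w}_2$, I find
\[
\norm{e}_0^2=B(e,w-w_h)\ls\big(h\norm{\na e}_0+k^{1/2}h^{3/2}\,k^{1/2}\norm{e}_{L^2(\Ga)}\big)\norm{w}_2.
\]
Invoking $\norm{w}_2\ls\norm{e}_0$, dividing by $\norm{e}_0$, and feeding in the first step yields $\norm{e}_0\ls(h+k^{1/2}h^{3/2})\inf_{v_h}\norme{u-v_h}$. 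The mesh regime $k(kh)^2\le C_0$ forces $kh\ls1$, hence $k^{1/2}h^{3/2}=h(kh)^{1/2}\ls h$, and \eqref{ellemm-eq2} follows. The energy estimate \eqref{ellemm-eq1} is then immediate: $\norme{e}^2=\norm{\na e}_0^2+k^2\norm{e}_0^2\ls\big(1+(kh)^2\big)\inf_{v_h}\norme{u-v_h}^2\ls\inf_{v_h}\norme{u-v_h}^2$, again absorbing $(kh)^2\ls1$.

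The main obstacle is making the duality step genuinely $k$-explicit: one must pin down the stability and $H^2$-regularity constants of the adjoint impedance problem with the correct powers of $k$ so that the boundary contribution $k\norm{e}_{L^2(\Ga)}$ and the interior term assemble into a clean factor $h$ rather than one polluted by $k$. This is precisely where the smallness of $kh$ enters, and it is the reason the estimates are inherited from, rather than strictly sharper than, the corresponding Helmholtz results in \cite{zw,dw,dzh}.
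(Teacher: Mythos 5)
Your proposal takes a genuinely different route from the paper's own proof, which is essentially a citation: the paper quotes from Lemma 3.5 of \cite{zw} the two quasi-optimality bounds $\norm{u-P_hu}_1\ls\mathcal{E}$ and $\norm{u-P_hu}_0\ls h\mathcal{E}$, where $\mathcal{E}:=\inf_{v_h\in V_h}\bigl(\norm{u-v_h}_1^2+k\norm{u-v_h}_{L^2(\Ga)}^2\bigr)^{1/2}$, and then only adds the conversion $\mathcal{E}\ls\inf_{v_h\in V_h}\norme{u-v_h}$ via the multiplicative trace inequality and Young's inequality --- exactly the step you also perform. What you do instead is reconstruct the content of the cited lemma from scratch: your G\aa{}rding/orthogonality step and your identification of the dual problem $-\De w=e$, $\pa w/\pa n-\i kw=0$ are correct, and this is indeed the machinery behind the citation.

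There are, however, two genuine gaps. First, the assertion ``since $\Om$ is star-shaped this problem is $H^2$-regular with the $k$-uniform bound $\norm{w}_2\ls\norm{e}_0$'' is wrong as stated: star-shapedness does not imply $H^2$ regularity (the paper's own L-shaped test domain is strictly star-shaped yet has a reentrant corner); one needs convexity of the polygon or smoothness of $\Ga$, and even then the $k$-uniformity of the constant is a nontrivial separate lemma (one must show, via a Grisvard/Rellich identity, that the $k$-dependent boundary contribution is purely imaginary up to corner terms). This regularity statement is precisely the hard content hidden inside the citation to \cite{zw}, and your proof asserts it rather than proves it. Second, the lemma carries no mesh condition --- the paper stresses right after Theorem~\ref{main2} that the elliptic-projection estimates do not require $k(kh)^2\le C_0$ --- yet you invoke that condition to absorb $k^{1/2}h^{3/2}\ls h$; as written, your duality step only yields the conditional bound $\norm{u-P_hu}_0\ls h\bigl(1+(kh)^{1/2}\bigr)\inf_{v_h\in V_h}\norme{u-v_h}$. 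The missing idea that removes this factor is to exploit the dual boundary condition $\pa w/\pa n=\i kw$: it gives $\norm{w}_{H^{1/2}(\Ga)}=k^{-1}\norm{\pa w/\pa n}_{H^{1/2}(\Ga)}\ls k^{-1}\norm{w}_2$, hence by interpolation between $H^{1/2}(\Ga)$ and $H^{3/2}(\Ga)$ (understood edgewise on the polygon) $\abs{w}_{H^1(\Ga)}\ls k^{-1/2}\norm{e}_0$; bounding the boundary interpolation error by $\norm{w-w_h}_{L^2(\Ga)}\ls h\abs{w}_{H^1(\Ga)}$ instead of $h^{3/2}\norm{w}_2$, the boundary term in the duality identity becomes $\ls h\norm{e}_0\inf_{v_h\in V_h}\norme{u-v_h}$ with no power of $kh$, making \eqref{ellemm-eq2} unconditional. (To be fair, the paper's own passage from the cited bounds to \eqref{ellemm-eq1} also silently uses $kh\ls1$ to absorb $k^2\norm{u-P_hu}_0^2\ls(kh)^2\mathcal{E}^2$, so that part of the caveat is shared; the conditional character of your \eqref{ellemm-eq2} is not.)
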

\begin{proof}
From Lemma~3.5 in \cite{zw}, we know that
\eqn{
\norm{u-P_hu}_1 &\ls \inf_{v_h\in V_h} \big(\norm{u-v_h}_1^2+k\norm{u-v_h}_{L^2(\Ga)}^2\big)^{1/2},\\
\norm{u-P_hu}_0 &\ls h \inf_{v_h\in V_h} \big(\norm{u-v_h}_1^2+k\norm{u-v_h}_{L^2(\Ga)}^2\big)^{1/2}.
}
Then the estimates \eqref{ellemm-eq1}--\eqref{ellemm-eq2} follow from
\eqn{ k\norm{u-v_h}_{L^2(\Ga)}^2 &\ls k\norm{u-v_h}_0\norm{u-v_h}_1\\
&\ls k^2\norm{u-v_h}_0^2 + \norm{u-v_h}_1^2 \ls \norme{u-v_h}^2.
}
\end{proof}

\begin{lemma}\label{naphu}
Assume that $\T_h$ satisfies the \emph{$\al$ approximation condition} and $u$ is the exact solution to \eqref{eq1.1a}--\eqref{eq1.1b}.
$P_hu$ is its elliptic projection defined by \eqref{ellpro} and $u_I$ is its linear interpolation. We have
\eq{
\norm{\na P_hu-\na u_I}_0 \ls \big(kh^{1+\al}+(kh)^2\big) C_{u,g}. \label{eq:nabla-phu-ui}
}
\end{lemma}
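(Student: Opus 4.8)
The plan is to test the defining relation of the elliptic projection against $w_h:=P_hu-u_I\in V_h$, so that the target quantity $\norm{\na w_h}_0$ is extracted from $a(w_h,w_h)=\norm{\na w_h}_0^2$. Writing $a(w_h,w_h)=a(P_hu-u,w_h)+a(u-u_I,w_h)$, the first piece is eliminated by the Galerkin orthogonality built into \eqref{ellpro}, namely $a(P_hu-u,v_h)=-\i k\pd{P_hu-u,v_h}$ for all $v_h\in V_h$. Taking $v_h=w_h$ gives
\eqn{ \norm{\na w_h}_0^2 &= -\i k\pd{P_hu-u,w_h} + a(u-u_I,w_h). }

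First I would dispose of the boundary term. A direct estimate of $k\pd{P_hu-u,w_h}$ through $\norm{u-P_hu}_{L^2(\Ga)}$ only yields order $(kh)^{3/2}$, which is too large to fit the target $kh^{1+\al}+(kh)^2$. The key is to split $P_hu-u=w_h+(u_I-u)$ once more, so that $-\i k\pd{P_hu-u,w_h}=-\i k\norm{w_h}_{L^2(\Ga)}^2-\i k\pd{u_I-u,w_h}$. Since $\norm{\na w_h}_0^2$ is real, taking real parts annihilates the purely imaginary self-term $-\i k\norm{w_h}_{L^2(\Ga)}^2$, leaving
\eqn{ \norm{\na w_h}_0^2 &= \Re\big(-\i k\pd{u_I-u,w_h}\big) + \Re\, a(u-u_I,w_h). }
Now both contributions involve only $u-u_I$ and $w_h$. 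The boundary term $\abs{k\pd{u-u_I,w_h}}$ is estimated exactly as in \eqref{eqend} of Theorem~\ref{uhui} (trace inequality, the edgewise interpolation bound $\norm{u-u_I}_{L^2(\Ga)}\ls h^2\norm{u}_{H^2(\Ga)}$, and $\norm{w_h}_{L^2(\Ga)}\ls k^{-1/2}\norme{w_h}$), giving $\abs{k\pd{u-u_I,w_h}}\ls (kh)^2\norme{w_h}C_{u,g}$; and the interior term $\abs{a(u-u_I,w_h)}$ is controlled directly by Lemma~\ref{uui}, giving $\ls (kh^{1+\al}+(kh)^2)\norme{w_h}C_{u,g}$.

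It then remains to convert the factor $\norme{w_h}$ into $\norm{\na w_h}_0$. I would use $\norme{w_h}\le\norm{\na w_h}_0+k\norm{w_h}_0$ and show $k\norm{w_h}_0\ls(kh)^2C_{u,g}$: from $w_h=(P_hu-u)+(u-u_I)$, together with the $L^2$ estimate \eqref{ellemm-eq2} of Lemma~\ref{ellemm} (bounding the infimum by the choice $v_h=u_I$), the interpolation bounds $\norm{u-u_I}_0\ls h^2\norm{u}_2$, $\norm{\na(u-u_I)}_0\ls h\norm{u}_2$, and $\norm{u}_j\ls k^{j-1}C_{u,g}$, one first obtains $\norme{u-u_I}\ls kh\,C_{u,g}$ and hence $k\norm{P_hu-u}_0\ls kh\,\norme{u-u_I}\ls(kh)^2C_{u,g}$. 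Substituting $\norme{w_h}\ls\norm{\na w_h}_0+(kh)^2C_{u,g}$ into the two estimates above yields, with $A:=(kh^{1+\al}+(kh)^2)C_{u,g}$, an inequality of the form $\norm{\na w_h}_0^2\ls A\,\norm{\na w_h}_0+A^2$ (using $(kh)^2C_{u,g}\le A$ and the mesh condition $k(kh)^2\le C_0$, so $(kh)^2\ls1$). A Young/quadratic-formula argument then gives $\norm{\na w_h}_0\ls A$, which is precisely \eqref{eq:nabla-phu-ui}.

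The main obstacle is exactly the boundary term produced by the elliptic-projection orthogonality: estimated naively through $\norm{u-P_hu}_{L^2(\Ga)}$ it is off by half an order in $kh$, and the resolution---peeling off the real, purely boundary self-term $k\norm{w_h}_{L^2(\Ga)}^2$ so that it vanishes under $\Re(\cdot)$ and reduces the problem to the already available estimate \eqref{eqend}---is the crux of the argument. Everything else is a reprise of the estimates in Theorem~\ref{uhui} and Lemma~\ref{ellemm}.
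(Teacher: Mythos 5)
Your proposal is correct and takes essentially the same route as the paper's own proof: both hinge on the Galerkin orthogonality built into \eqref{ellpro}, on the fact that the boundary self-term $\i k\norm{P_hu-u_I}_{L^2(\Ga)}^2$ is purely imaginary and so vanishes under $\Re(\cdot)$, and then on Lemma~\ref{uui}, the estimate \eqref{eqend}, and the $L^2$-bound \eqref{ellemm-eq2} of Lemma~\ref{ellemm} to control the lower-order term. The only differences are organizational --- the paper tests with the full norm $\norme{P_hu-u_I}^2$ and bounds the extra term $k^2(P_hu-u_I,v_h)$ directly, while you keep $\norm{\na w_h}_0^2$ on the left and close with a quadratic/Young argument; also, your appeal to $k(kh)^2\le C_0$ is stronger than necessary (only $kh\ls 1$ is used), in line with the paper's remark that this lemma does not require the mesh condition.
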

\begin{proof}
Denote $v_h=P_hu-u_I$. By the Galerkin orthogonality,
\eq{ \norme{P_hu-u_I}^2 &\ls\Re\big( a(P_hu-u_I,v_h) + \i k\pd{P_hu-u_I,v_h} \big) + k^2(P_hu-u_I,v_h)\\
&\ls\Re\big( a(u-u_I,v_h) + \i k\pd{u-u_I,v_h} \big) + k^2(P_hu-u_I,v_h)\nn\\
&\ls \abs{a(u-u_I,v_h)} + \abs{k\pd{u-u_I,v_h}} + k\norm{P_hu-u_I}_0\cdot k\norm{v_h}_0. \nn}
Then the estimate \eqref{eq:nabla-phu-ui} follows from Lemma~\ref{uui}, \eqref{eqend} and the fact that
\eqn{ k\norm{u_h^+-u_I}_0\cdot k\norm{v_h}_0 &\leq \big(k\norm{u_h^+-u}_0+k\norm{u-u_I}_0\big)\norme{v_h}_{1,h}\\
&\ls (kh)^2\big(1+\sqrt{kh}\big)\norme{v_h}_{1,h} C_{u,g}. }
\end{proof}

Similar to \eqref{ghuh}, we have
\eq{ \norm{G_hP_hu-\na u}_0 \ls \norm{\na(P_hu-u_I)}_0 + \norm{G_hu_I-\na u}_0.  \label{ghphu}}
Then from Lemma~\ref{naphu}, Theorem~\ref{main0} and \eqref{ghphu}, we can obtain the following theorem.
\begin{theorem} \label{main2}
Assume that $\T_h$ satisfies the \emph{$\al$ approximation condition}.
Let $u$ and $P_hu$ be the solutions to \eqref{eq1.1a}--\eqref{eq1.1b} and \eqref{ellpro}, respectively.
Then the following error estimate holds:
\eq{ \norm{G_hP_hu-\na u}_0\ls \big(kh^{1+\al}+(kh)^2\big)  C_{u,g}. \label{phusup} }
\end{theorem}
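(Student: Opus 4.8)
The plan is to reproduce, for the elliptic projection $P_hu$ in place of the finite element solution $u_h$, the same splitting that produced \eqref{ghuh}, and then feed in the two estimates already established. First I would insert the linear interpolant $u_I$, apply the triangle inequality, and invoke the stability property (i) of the recovery operator. Since $P_hu-u_I\in V_h$, property (i) gives
\eqn{ \norm{G_hP_hu-\na u}_0 &\le \norm{G_h(P_hu-u_I)}_0 + \norm{G_hu_I-\na u}_0 \\ &\ls \norm{\na(P_hu-u_I)}_0 + \norm{G_hu_I-\na u}_0, }
which is exactly the decomposition \eqref{ghphu}. So the whole theorem reduces to controlling these two terms.

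It then remains to bound each piece separately. The recovery/interpolation term $\norm{G_hu_I-\na u}_0$ is pure superapproximation of the gradient and is handled by Theorem~\ref{main0}, giving $(kh)^2C_{u,g}$. The projection--interpolation term $\norm{\na(P_hu-u_I)}_0$ is bounded by Lemma~\ref{naphu}, giving $\big(kh^{1+\al}+(kh)^2\big)C_{u,g}$. Adding the two estimates and absorbing the two $(kh)^2$ contributions into a single one produces the claimed bound \eqref{phusup}.

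Because every ingredient is already in hand, there is essentially no obstacle internal to this statement; it is a corollary of the preceding results. The conceptually decisive point sits upstream, in Lemma~\ref{naphu}: the Galerkin argument for $P_hu$ generates a term $k^2(P_hu-u_I,v_h)$, and the critical fact is that $k\norm{u-P_hu}_0\ls(kh)^2C_{u,g}$ \emph{without} a pollution factor, which follows from the $L^2$ superapproximation \eqref{ellemm-eq2} of the elliptic projection. This is precisely what distinguishes \eqref{phusup} from Theorem~\ref{main1}: the estimate here carries the clean interpolation-type order $(kh)^2$ rather than the polluted order $k(kh)^2$, so the only care required in the assembly is to track that $P_hu$, unlike $u_h$, contributes no $k(kh)^2$ term.
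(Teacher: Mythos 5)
Your proposal is correct and follows exactly the paper's own route: the decomposition \eqref{ghphu} via the triangle inequality and the stability property (i) of $G_h$, followed by Lemma~\ref{naphu} for $\norm{\na(P_hu-u_I)}_0$ and Theorem~\ref{main0} for $\norm{G_hu_I-\na u}_0$. Your closing observation---that the absence of the polluted $k(kh)^2$ term traces back to the $L^2$ superapproximation \eqref{ellemm-eq2} of the elliptic projection used inside Lemma~\ref{naphu}---matches the paper's own remark after the theorem.
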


From Lemma~\ref{ellemm}, we {see} that the elliptic projection of $u$ is not \emph{polluted}.
As a result, the second {term on the right-hand} side of the estimate \eqref{phusup} is $(kh)^2$
instead of $k(kh)^2$. {Furthermore}, the error estimate of the elliptic projection of $u$
does not require the mesh condition $k(kh)^2\leq C_0$.

\begin{theorem}\label{main3}
Assume that $\T_h$ satisfies the \emph{$\al$ approximation condition}.
Let $u_h$ be the linear finite element solution to the problem \eqref{eq1.1a}--\eqref{eq1.1b}. We have
\eq{ \norm{G_hu_h-\na u_h}_0\ls \big(kh+k(kh)^3\big) C_{u,g}. \label{eq:th2eq1} }
\end{theorem}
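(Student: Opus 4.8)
The plan is to use the elliptic projection $P_hu$ defined in \eqref{ellpro} as an intermediary, since it is the \emph{unpolluted} companion of $u_h$: Theorem~\ref{main1} shows that $G_hu_h-\na u$ carries the pollution $k(kh)^2$, while \eqref{fempoll} shows that $\na u_h-\na u$ carries the \emph{same} pollution $k(kh)^2$, so this effect should cancel in the difference $G_hu_h-\na u_h$. To expose the cancellation I would split, using the linearity of $G_h$,
\eqn{ G_hu_h-\na u_h = \big(G_hP_hu-\na P_hu\big) - \big(G_h(P_hu-u_h)-\na(P_hu-u_h)\big), }
and treat the two groups separately: the first is the genuinely unpolluted piece, while the second isolates precisely the polluted part of $u_h$.

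For the first group I would bound $\norm{G_hP_hu-\na P_hu}_0\le\norm{G_hP_hu-\na u}_0+\norm{\na u-\na P_hu}_0$. Theorem~\ref{main2} estimates the first summand by $(kh^{1+\al}+(kh)^2)C_{u,g}$, and the second is at most $\norme{u-P_hu}\ls\norme{u-u_I}\ls kh\,C_{u,g}$ by Lemma~\ref{ellemm} and the standard interpolation estimate for $\norme{u-u_I}$. Hence this group contributes only the leading order $kh\,C_{u,g}$, with no pollution.

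The essential work is the second group. Writing $w_h:=P_hu-u_h$ and combining \eqref{fem}, \eqref{ellpro} and the weak form of \eqref{eq1.1a}--\eqref{eq1.1b}, I find $a(w_h,v_h)+\i k\pd{w_h,v_h}=k^2(u-u_h,v_h)$ for all $v_h\in V_h$; that is, $w_h=P_h\phi$ is the elliptic projection of the solution $\phi$ of the impedance--Poisson problem \eqref{eq2.2a}--\eqref{eq2.2b} with data $F=k^2(u-u_h)$ and boundary datum $g=0$. Since Theorem~\ref{main2}, Lemma~\ref{naphu} and Theorem~\ref{main0} only use that the underlying function lies in $H^3(\Om)$ and obeys a Robin/impedance condition, I can apply them verbatim to $\phi$ (with $g=0$) to obtain $\norm{G_h(P_hu-u_h)-\na(P_hu-u_h)}_0=\norm{G_hP_h\phi-\na P_h\phi}_0\ls kh\,C_{\phi,0}$, where $C_{\phi,0}:=\sum_{j=1}^3 k^{-(j-1)}\norm{\phi}_j$.

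It then remains to show $C_{\phi,0}\ls k(kh)^2\,C_{u,g}$, and this is the main obstacle. The input is Lemma~\ref{lemma1}, which gives $\norm{u-u_h}_0\ls (kh)^2 C_{u,g}$ and hence $\norm{F}_0\ls k^2(kh)^2C_{u,g}$; one must then convert this into sharp, $k$-explicit $H^1$, $H^2$ and $H^3$ regularity bounds for $\phi$. This step is delicate because the naive energy estimate $\norm{\phi}_1\ls\norm{F}_0$ loses a factor of $k$: by itself it would only yield the pollution order $k(kh)^2$, exactly what the crude bound $\norm{G_hw_h-\na w_h}_0\le\norm{G_hw_h}_0+\norm{\na w_h}_0\ls\norme{w_h}\ls(kh+k(kh)^2)C_{u,g}$ already produces. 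The improvement to $k(kh)^3$ must come from the strictly star-shaped geometry of $\Om$, through Rellich/Morawetz-type identities that control the boundary contributions carried by the impedance condition $\pa_n\phi=-\i k\phi$ with the correct powers of $k$. Once $C_{\phi,0}\ls k(kh)^2C_{u,g}$ is established, combining the two groups gives $\norm{G_hu_h-\na u_h}_0\ls(kh+k(kh)^3)C_{u,g}$.
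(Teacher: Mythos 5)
Your decomposition is the same as the paper's (the paper writes $u_h=P_hu+\ta_h$ with $\ta_h=u_h-P_hu=-w_h$), and your treatment of the first group $\norm{G_hP_hu-\na P_hu}_0$ via Theorem~\ref{main2} and Lemma~\ref{ellemm} matches the paper's proof. The gap is in the second group, and it sits exactly at the step you flag as "the main obstacle": your plan requires applying the superconvergence machinery (Theorem~\ref{main2}) to $\phi$, which needs both $\phi\in H^3(\Om)$ and the bound $C_{\phi,0}\ls k(kh)^2C_{u,g}$, and neither is available. For the $H^1$ part, the stability estimate $\norm{\phi}_1\ls\norm{F}_0$ for the Laplace--impedance problem is sharp: there is no extra factor $k^{-1}$ to be gained, because Rellich/Morawetz identities exploit the $-k^2$ zeroth-order term of the Helmholtz operator and give nothing for $-\De$ alone. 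Honest bookkeeping along your route therefore gives $C_{\phi,0}\ls k^2(kh)^2C_{u,g}$ and a final bound $k^2(kh)^3C_{u,g}$, worse than the target by a factor of $k$. The $H^3$ requirement is also fatal: on a polygon $\phi$ need not lie in $H^3$, and even formally $\norm{\phi}_3$ would be controlled by $\norm{F}_1=k^2\norm{u-u_h}_1$, which carries the non-polluted term $kh$ and again destroys the bound you need.

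The paper's proof shows that superconvergence is simply not needed for this piece. Since $\ta_h$ is the finite element solution (i.e., the elliptic projection) of the Poisson--impedance problem with exact solution $\ta$, one uses only \emph{first-order} bounds against $\na\ta$:
\eq{ \norm{G_h\ta_h-\na\ta_h}_0\le\norm{G_h\ta_h-\na\ta}_0+\norm{\na\ta-\na\ta_h}_0\ls h\norm{\ta}_2, \nn}
which requires nothing beyond $H^2$ regularity, $\norm{\ta}_2\ls\norm{F}_0=k^2\norm{u-u_h}_0$. Combining with the $L^2$ estimate of Lemma~\ref{lemma1} (so, like your argument, the proof implicitly needs $k(kh)^2\le C_0$) gives $k^2h\,(kh^2+k^2h^2)C_{u,g}=((kh)^3+k(kh)^3)C_{u,g}$. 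The insight you missed is that the gain of one power of $h$ over the crude bound $\norme{w_h}\ls(kh+k(kh)^2)C_{u,g}$ comes not from superconvergence of the recovery but from comparing $G_h\ta_h$ and $\na\ta_h$ each to the exact $\na\ta$ of a problem whose data $F=-k^2(u-u_h)$ is already small in $L^2$; this sidesteps every $k$-explicit $H^1$/$H^3$ regularity question your route runs into.
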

\begin{proof}
We write $u_h=P_hu+(u_h-P_hu):=P_hu+\ta_h$, where $P_hu$ is defined by \eqref{ellpro}.
Then by the triangle inequality we have
\eq{ \norm{G_hu_h-\na u_h}_0 &= \norm{G_h(P_hu+\ta_h)-\na (P_hu+\ta_h)}_0\label{main3eq1}\\
&\ls \norm{G_hP_hu-\na P_hu}_0 + \norm{G_h\ta_h-\na \ta_h}_0. \nn}
From Lemma~\ref{ellemm},
\eqn{ \norm{\na u-\na P_hu}_0^2 &\ls \inf_{v_h\in V_h} \norme{u-v_h}^2 \ls \abs{u-u_I}_1^2 + k^2\norm{u-u_I}_0^2 \\
&\ls h^2(1+k^2h^2)\norm{u}_2^2 \ls (kh)^2 (1+(kh)^2) C_{u,g}^2, }
which imples that from Theorem~\ref{main2},
\eq{ \norm{G_hP_hu-\na P_hu}_0 &\leq \norm{G_hP_hu-\na u}_0 + \norm{\na u-\na P_hu}_0 \label{main3eq2}\\
&\ls (kh^{1+\al}+(kh)^2)C_{u,g} + khC_{u,g} \nn\\
&\ls (kh+(kh)^2)C_{u,g}. \nn }
From \eqref{fem} and \eqref{ellpro}, we see that $\ta_h$ satisfies
\eq{ a(\ta_h,v_h) + \i k\pd{\ta_h,v_h} = - k^2(u-u_h,v_h). }
It is easy to see that $\ta_h$ can be understood as the finite element solution to the following Poisson problem
with Robin boundary:
\eqn{ -\De \ta &= -k^2(u-u_h) \quad\mathrm{in} \quad \Om,\\
\frac{\pa \ta}{\pa n} + \i k\ta &= 0\quad\quad\quad\quad\qquad\mathrm{on}\quad \Ga.  }
Therefore,
\eq{ \norm{G_h\ta_h-\na \ta_h}_0 &\leq \norm{G_h\ta_h-\na \ta}_0 +  \norm{\na\ta-\na \ta_h}_0\label{main3eq3}\\
&\ls h\norm{\ta}_2\ls k^2h\norm{u-u_h}_0\nn\\
&\ls k^2h(kh^2+k^2h^2)C_{u,g}\nn\\
&\ls ((kh)^3+k(kh)^3)C_{u,g}.\nn }
The proof is completed by combining \eqref{main3eq1}--\eqref{main3eq3}.
\end{proof}

\emph{Remark} 5.1. We emphasize that the estimate of $\norm{G_hu_h-\na u_h}_0$ may not be perfect, although
$k(kh)^3$ is less than the pollution error $k(kh)^2$ of the operator $G_h$. What we expect is the estimate
$\norm{G_hu_h-\na u_h}_0\ls khC_{u,g}$, which coincides with our numerical tests in the next section.
However, if invoking the mesh condition $k(kh)^2\leq C_0$, we have $\norm{G_hu_h-\na u_h}_0\ls kh(1+C_0)C_{u,g}$, which indicates that the pollution error between these two quantities are ``almost" cancelled.

\section{Numerical examples} \label{num}
In this section, we will verify our theoretical results by simulating the following two-dimensional Helmholtz problem:
\eq{ -\De u - k^2 u &=f:=\frac{\sin(kr)}{r}  \qquad\mbox{in  } \Om, \label{num-eq-1}\\
\frac{\pa u}{\pa n} +\i k u& =g \quad\qquad\qquad\qquad\mbox{on } \Ga. \label{num-eq-2} }
$g$ is so chosen that the exact solution
is
\eq{ u=\frac{\cos(k r)}{r}-\frac{\cos k+\i\sin k}{k\big(J_0(k)+\i J_1(k)\big)}J_0(k r) }
in polar coordinates, where $J_\nu(z)$ are Bessel functions of the first kind.
\begin{figure}[htbp]
\begin{center}
\includegraphics[width=0.5\textwidth]{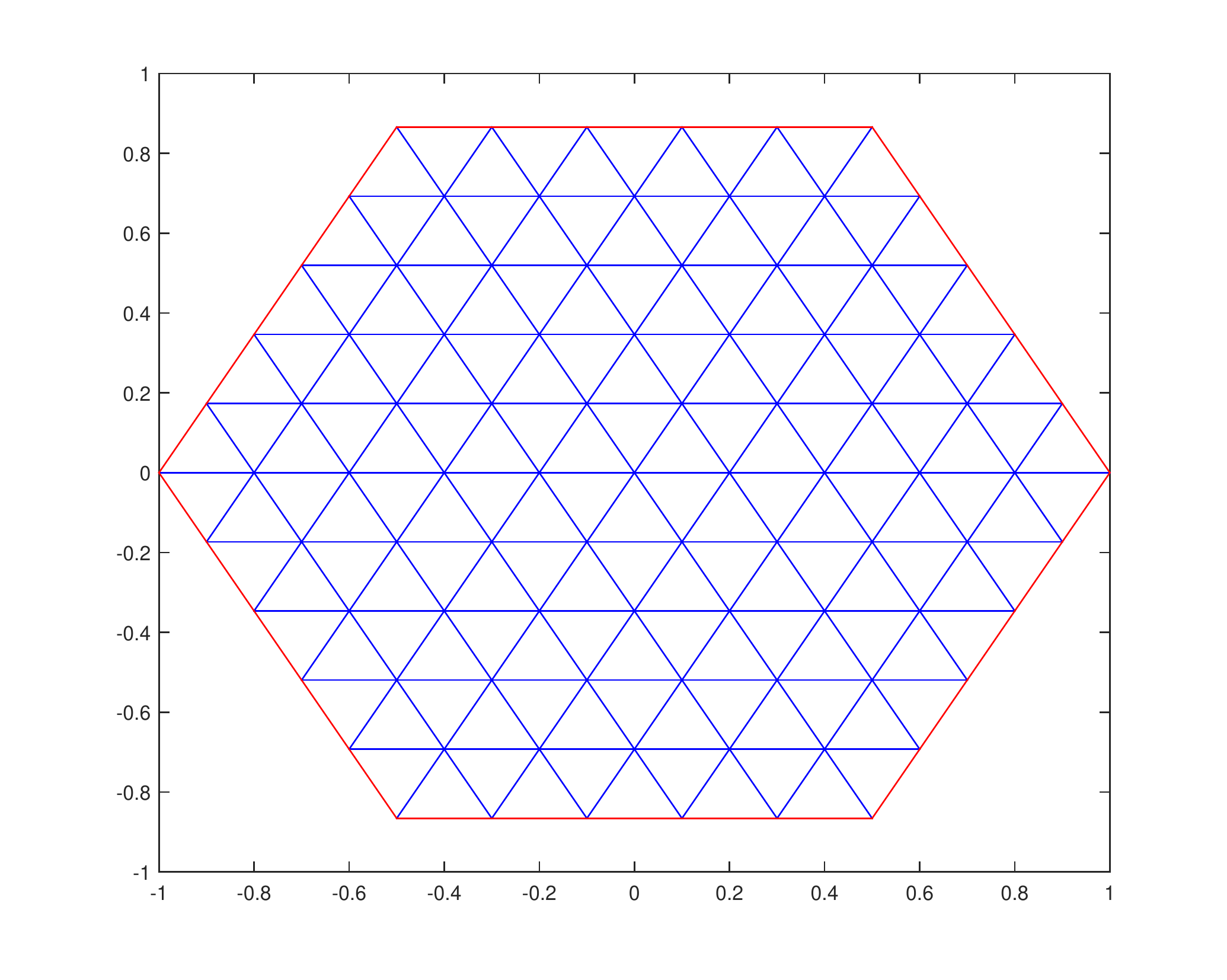}
\caption{Geometry and a sample mesh $\T_{1/5}$ that consists of congruent and equilateral triangles of size
$h=1/5$ for the example.}
\end{center}
\label{num-fig1}
\end{figure}

This problem has been computed in \cite{fw09,dw,zd,dzh} by the finite element method, the continuous interior penalty finite element
method and the interior penalty discontinuous Galerkin method on both triangular meshes and rectangular meshes.

\subsection{Errors of $\na u_h$ and $G_h u_h$ }
Let $\Om$ be the unit regular hexagon with center (0,0) (cf. Figure~\ref{num-fig1}).
For any positive integer $m$, let $\T_{1/m}$ be the regular triangulation that consists of $6m^2$ congruent and equilateral
triangles of size $h=1/m$. See Figure~\ref{num-fig1} for a sample triangulation $\T_{1/5}$. Let $u_h$ be the linear finite element solution in this subsection.

From Theorem~\ref{main1}, the error of the recovered {gradient in} the $H^1$-seminorm is bounded by
\eq{ \norm{G_hu_h-\na u}_0\leq C_1kh^{1+\al} + C_2k(kh)^2 \label{num-eq-3} }
for some constants $C_1$ and $C_2$ if $k(kh)^2\leq C_0$. The second {term} on the right-hand side of \eqref{num-eq-3} is the
so-called pollution error. We actually have
$\al=1$ because of the congruent and equilateral triangles in the meshes (see Figure~\ref{num-fig1}).
We now verify the error {bound by numerical} results.

\begin{figure}[htbp]
\begin{center}
\includegraphics[width=0.49\textwidth]{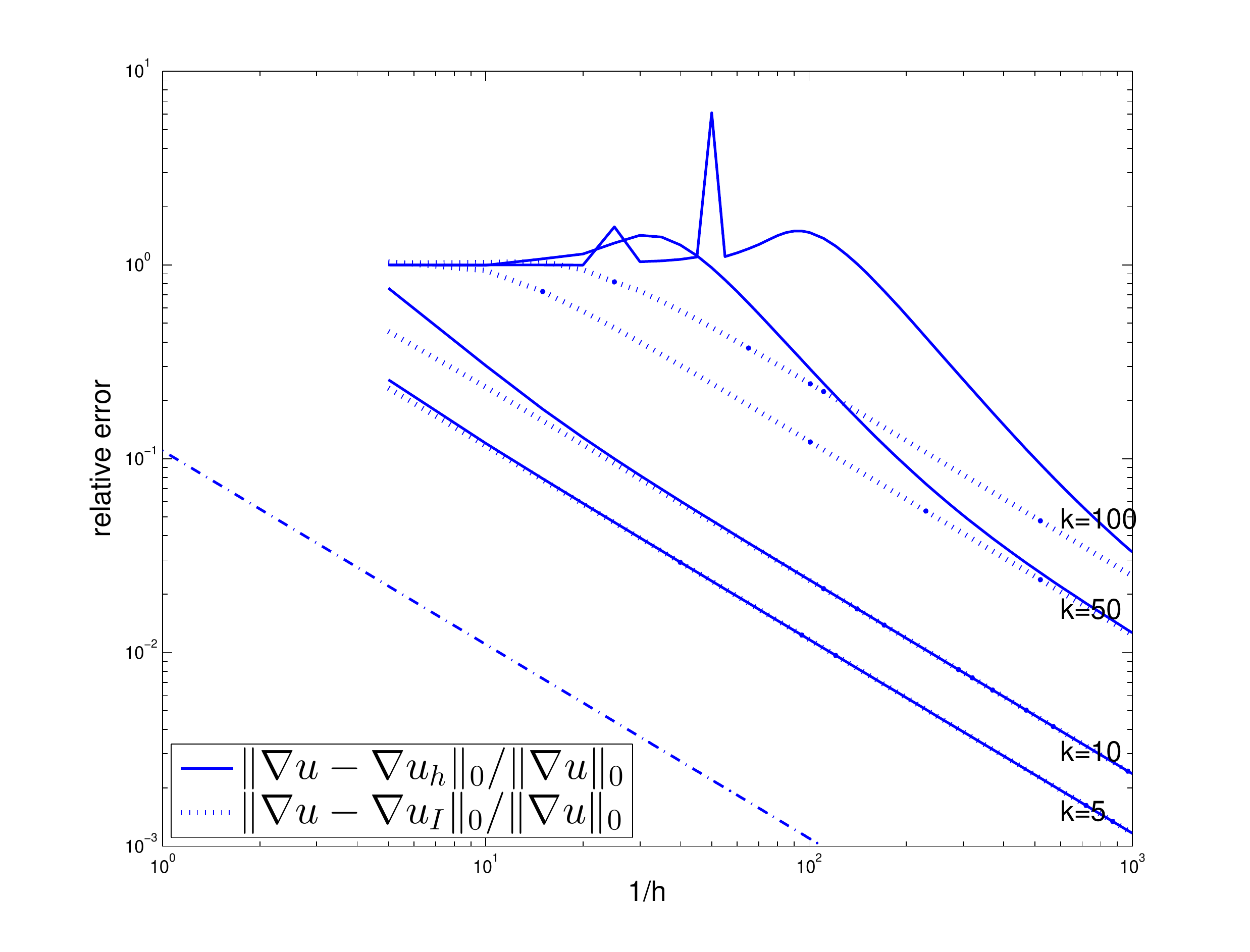}
\includegraphics[width=0.49\textwidth]{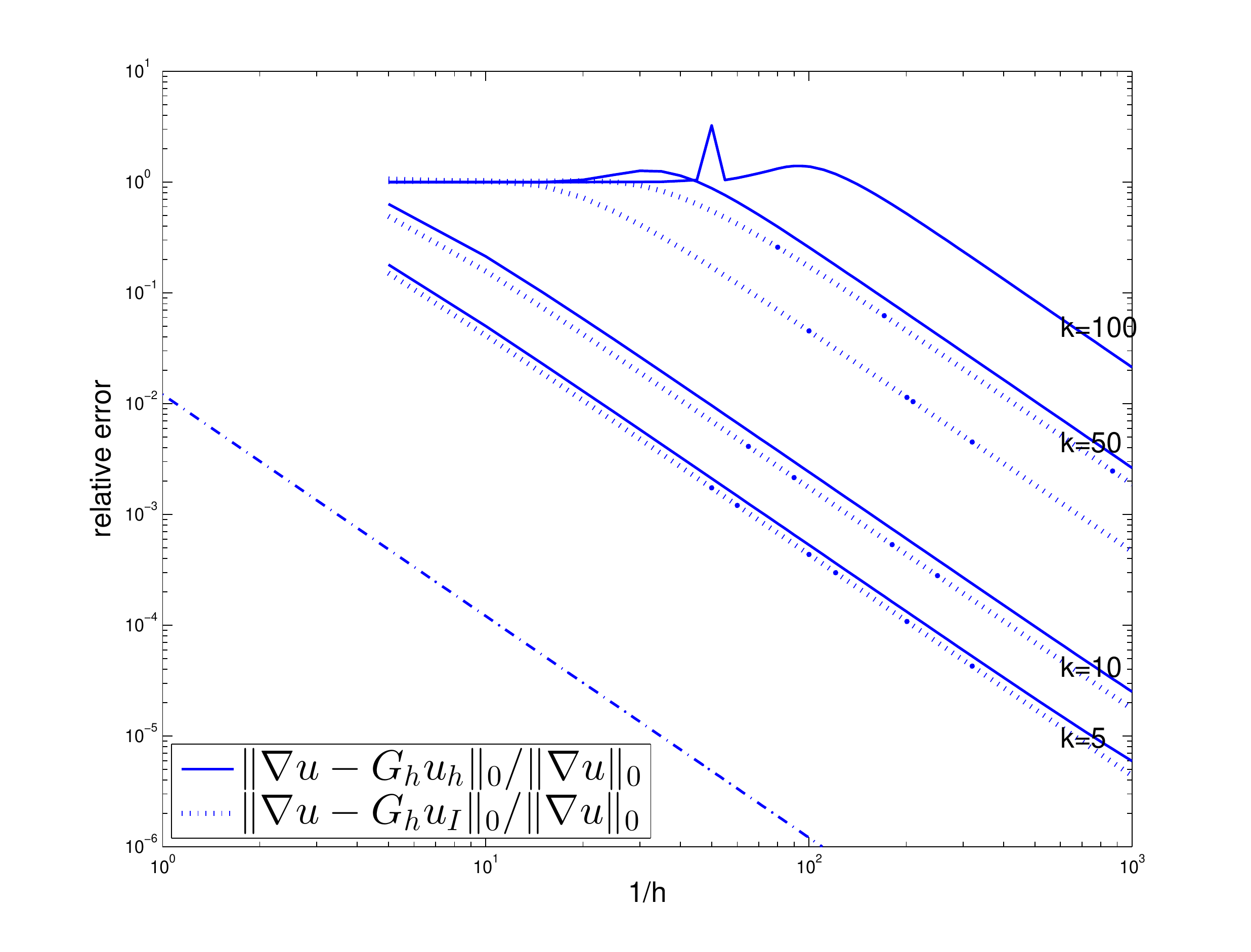}
\caption{Left graph: the relative error of the finite element solution (solid) and the relative error of the finite element
interpolant (dotted) in $H^1$-seminorm for $k=5,k=10,k=50$, and $k=100$, respectively. Right graph: the relative error of the recovered gradient of
the finite element solution and that of the finite element interpolant for $k=5,k=10,k=50$, and $k=100$, respectively.
Dash--dot lines give reference slopes $-1$ and $-2$, respectively.  }
\label{num-fig2}
\end{center}
\end{figure}

\begin{figure}[htbp]
\begin{center}
\includegraphics[width=0.5\textwidth]{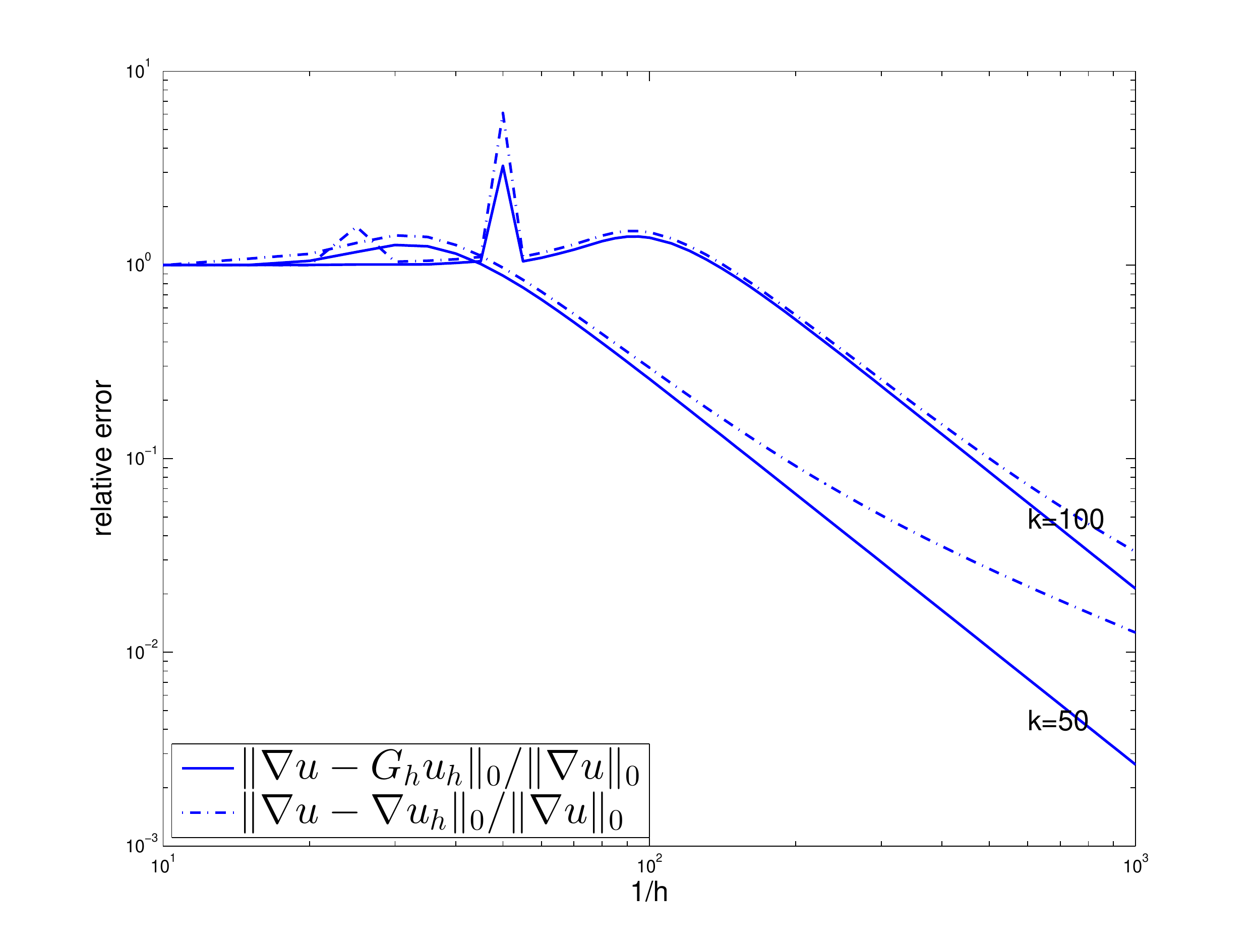}
\caption{The relative error of the finite element solution (dash--dot) and that of the recovered gradient of the finite element solution (solid).}\label{num-fig2a}
\end{center}
\end{figure}

In the left graph of Figure~\ref{num-fig2}, the $H^1$-seminorm relative error of the finite element solution
and that of the linear interpolant are displayed simultaneously.
When $k=5,10$, the relative error of the finite element solution is similar to that of
the interpolant. When $k=50,100$, we can easily detect
the existence of the pollution error and the effect of the mesh condition $k(kh)^2\leq C_0$.
By contrast, as shown in the right graph of Figure~\ref{num-fig2}, the gradient recovery method converges faster than the linear interpolant and the relative error of the recovered gradient decays at rate $h^2$ (slope $-2$ in the log-log scale)
 for $k=5, 10$. For $k=50, 100$, the relative error of the recovered gradient
stays around $100\%$ (no-convergence) and then decays at rate $h^2$. We notice that the ``no-convergence range" increases with $k$. In addition, Figure~\ref{num-fig2a} shows that the decaying points of both the gradient of the finite element solution $\na u_h$ and the recovered gradient $G_hu_h$ are the same, which indicates that their convergence mesh conditions are the same, that is $k(kh)^2\leq C_0$, and
the pollution effect is still there for the recovered gradient. We remark that the numerical tests for the pollution phenomenon of the finite element method have been done largely in the literature.
For more details, a reader is refer to \cite{dw} and references therein.

Next we verify more precisely the pollution term in \eqref{num-eq-3}. To do so, we introduce the definition of the critical mesh size with respect to
a given relative tolerance \cite{w,dw}.
\begin{definition}
Given a relative tolerance $\ep$, a wave number $k$, the critical mesh size $h(k,\ep)$ with respect to the relative tolerance $\ep$ is defined by the
maximum mesh size such that the relative error of the finite element solution in {the} $H^1$-seminorm (or the relative error of recovered gradient of the finite element solution in {the} $H^0$-norm) is less than or equal to $\ep$
\end{definition}

Clearly, if the pollution terms in \eqref{fempoll} and \eqref{num-eq-3} are of order $k^3h^2$, then $h(k,\ep)$
should be proportional to $k^{-3/2}$ for $k$ large enough. This is verified by Figure~\ref{num-fig3}.
So our theoretical result is sharp with respect to $k$ and $h$.

To compare the pollution errors more intuitively, we plot
the relative error of the finite element solution in the $H^1$-seminorm
and the relative error of the recovered gradient in the $H^0$-norm for
$k=1,2,\cdots,600$ with fixed $kh=1$ and $kh=1/2$ in the left graph of Figure~\ref{num-fig4}.
We see that both relative errors increase linearly in the pre-asymptotical range $k(kh)^2\leq C_0$:
for $kh=1/2$, the range is about $k\leq500$, and for $kh=1$, the range is much less with $k\leq100$.
However, we do not know the behaviors of these relative errors when $k$ is much larger theoretically.

To investigate further the influence of PPR to the pollution errors, we estimate the error between the gradient of the finite element solution and its recovered gradient
and prove that this error is controlled by $(kh+k(kh)^3)C_{u,g}$ (cf. Theorem~\ref{main3}).
In the right graph of Figure~\ref{num-fig4}, we depict this error for $kh=1$ and $kh=1/2$, respectively. We see that both of them are dominated by $kh$, which indicates that the pollution error is ``almost" cancelled
between the gradient of the finite element solution and its recovered gradient.
So our estimate in Theorem~\ref{main3} is pre-asymptotically correct under the mesh condition $k(kh)^2\leq C_0$.
However, the relative error estimate under other mesh condition is still unknown and deserves further study in the future.

Both graphs of Figure~\ref{num-fig4} imply that the gradient recovery method does not reduce the pollution error of
the finite element solution. Nevertheless, it does
improve the low order term in \eqref{fempoll}.

\begin{figure}[htbp]
\begin{center}
\includegraphics[width=0.49\textwidth]{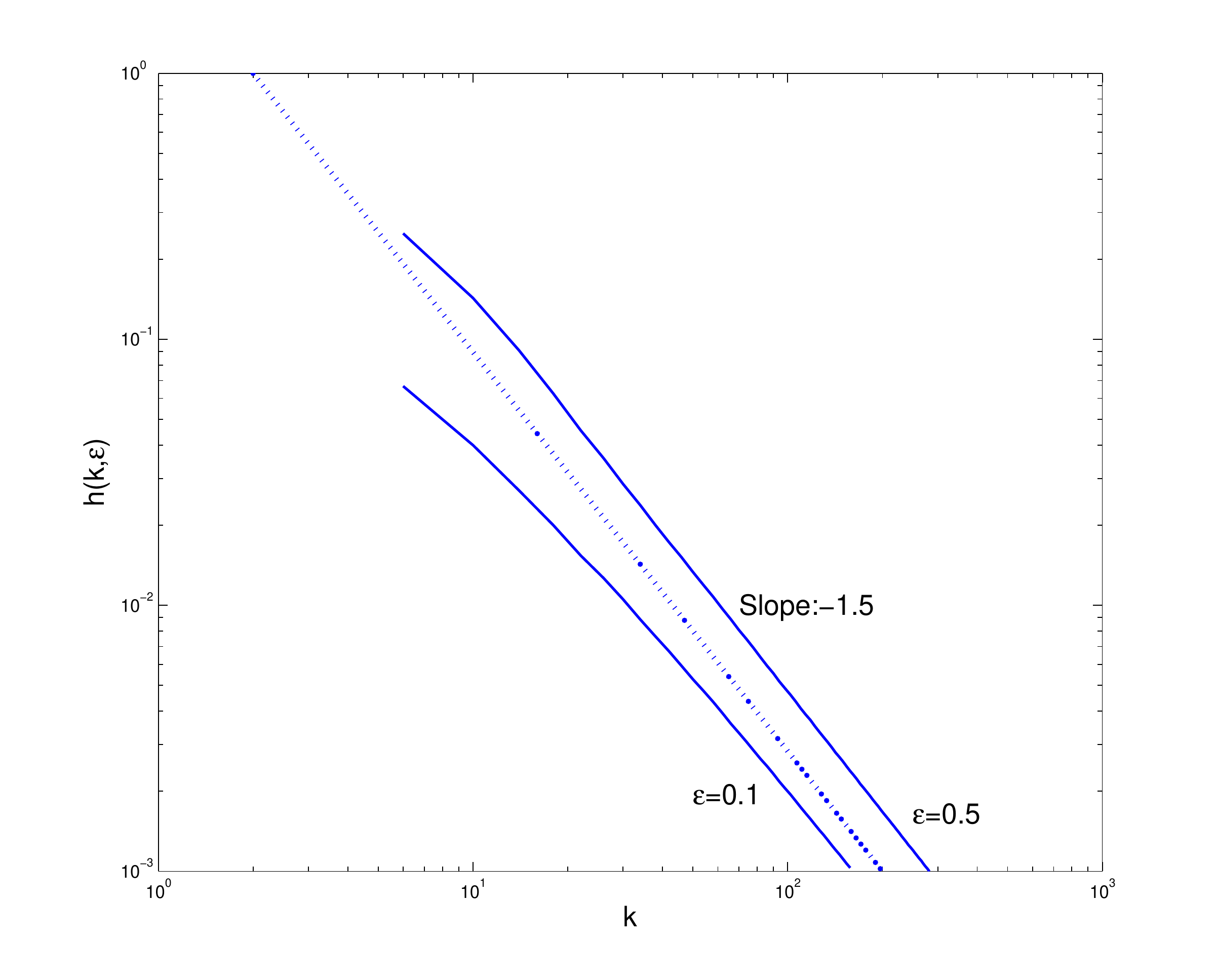}
\includegraphics[width=0.49\textwidth]{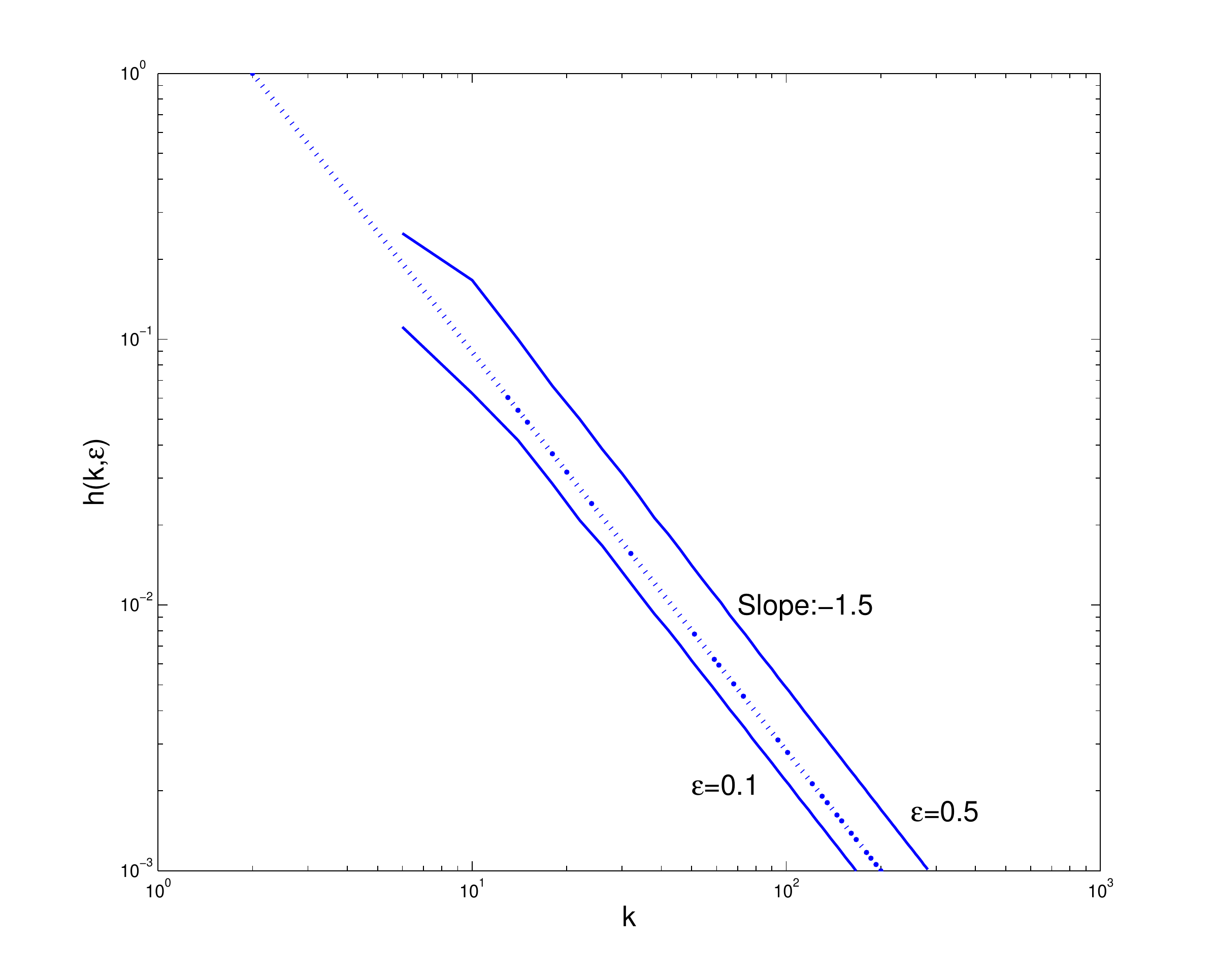}
\caption{$h(k,0.5)$ and $h(k,0.1)$ versus $k$ for the finite element solution (left) and for the recovered gradient of the finite element solution (right), respectively. The dotted lines give lines of slope $-1.5$ in the log-log scale.}
\label{num-fig3}
\end{center}
\end{figure}

\begin{figure}[htbp]
\begin{center}
\includegraphics[width=0.49\textwidth]{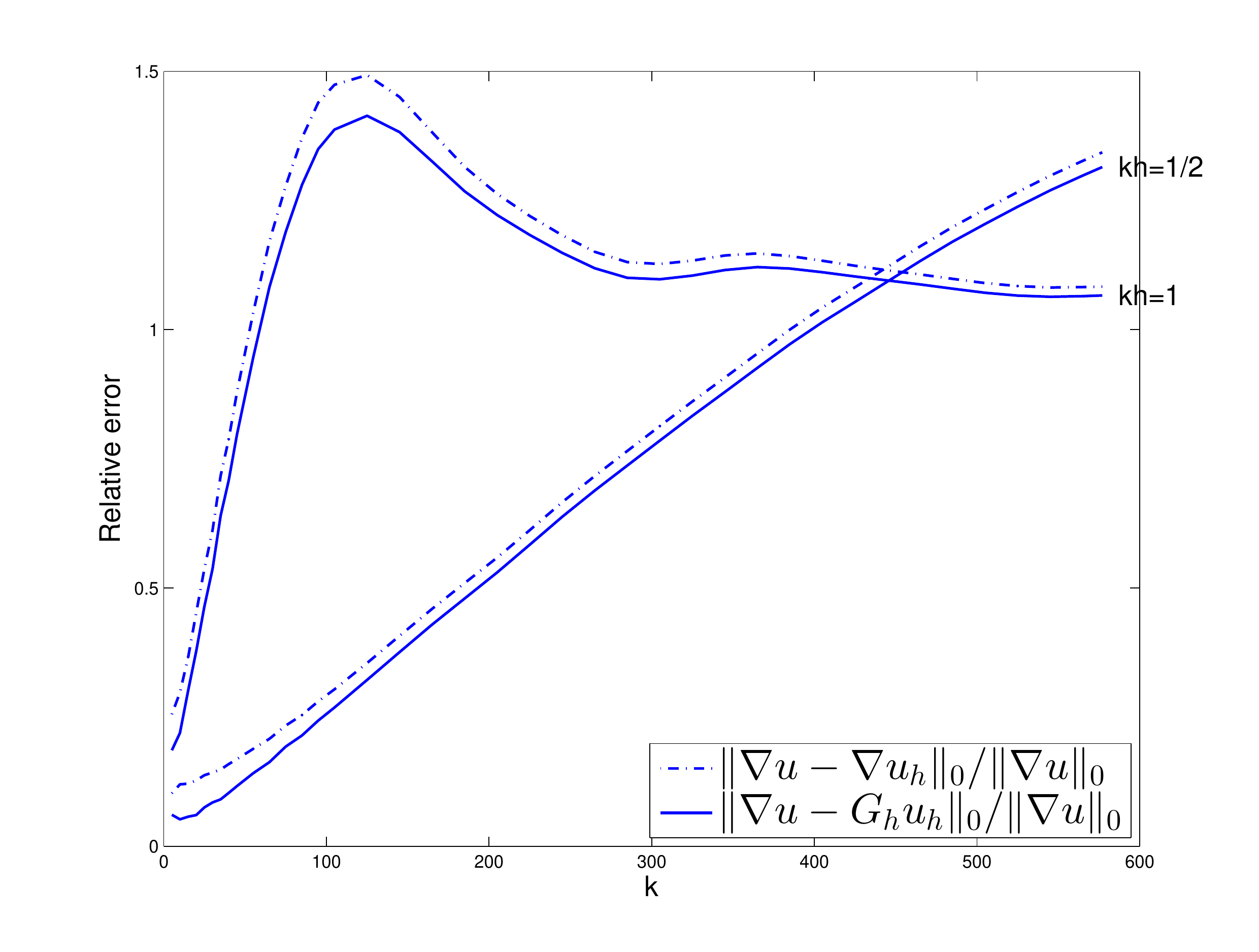}
\includegraphics[width=0.49\textwidth]{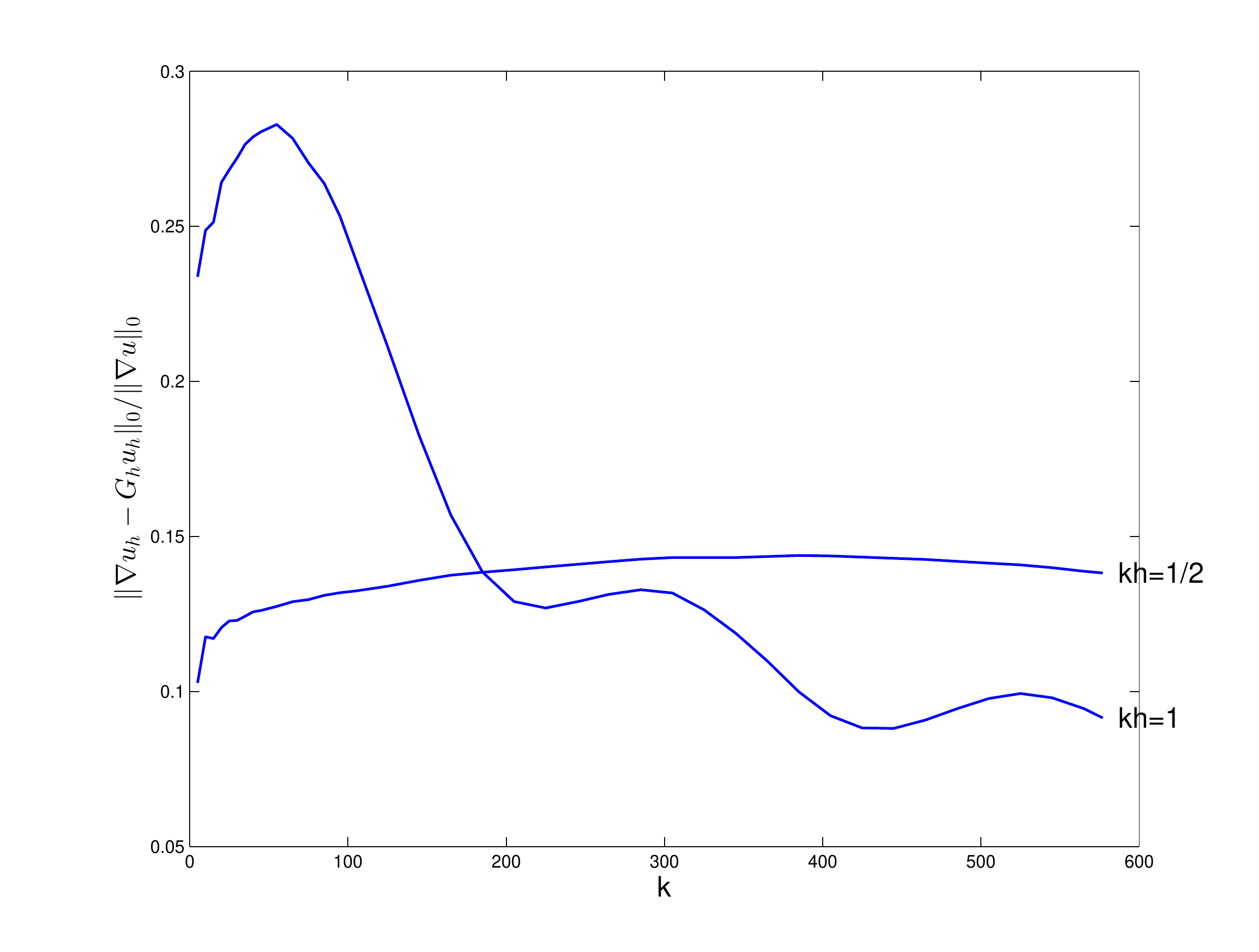}
\caption{Left graph: the relative error of the finite element solution in $H^1$-seminorm (dash-dot lines) and that of the recovered gradient of the finite element solution in $H^0$-norm (solid lines) with mesh size $h$ determined by $kh=1$ and $kh=1/2$. Right graph: the estimate between $G_hu_h$ and $\na u_h$ with mesh size $h$ determined by $kh=1$ and $kh=1/2$.}
\label{num-fig4}
\end{center}
\end{figure}

\subsection{Richardson extrapolation and the a posteriori error estimator} \label{sec_richardson_posteriori}
The Richardson extrapolation method is an efficient procedure to raise the accuracy of numerical methods, such as finite difference \cite{ms83} and finite element methods \cite{br86,w89,h83,lsy98}. In this subsection, we apply the Richardson extrapolation to the gradient and recovered gradient of the finite element solution, respectively.
Our motivation is based on the error estimate (\ref{ghpoll}) in Theorem \ref{main1}.
With $\alpha=1$, both the interpolation error and pollution error are of order $h^2$ (with different powers of $k$),
and hence Richardson extrapolation would work on both terms simultaneously.
Since the powers of $h$ in the two terms of (\ref{fempoll}) are not balanced, the Richardson extrapolation will not work without using PPR recovery.
Our numerical tests clearly demonstrate this difference as we shall see in the subsequence.

Let $\T_h$ be a uniform and regular triangulation and let $\T_{h/2}$ be generated from $\T_h$ by dividing each trangle as usual into four congruent subtriangles.

Define the Richardson extrapolation operator $R$ by
\eqn{Rv_{h/2}|_K=(4v_{h/2}-v_{h})/3\quad\forall K\in\T_{h/2},}
where $v_h$ is a piecewise polynomial function over $\T_h$.

We first simulate the probelm~\eqref{num-eq-1}--\eqref{num-eq-2} over regular pattern uniform triangulations $\T_h$ of the unit square $[0,1]\times[0,1]$.

\begin{table}
\begin{center}
\begin{tabular}{|c|c|c|c|c|c|}
\hline\noalign{\smallskip}
h & $\na u_h$ & $R\na u_h$ & $G_hu_h$ & $RG_hu_h$ & $G_hu_I$  \\
\noalign{\smallskip}\hline\noalign{\smallskip}
1/4 &      9.5808e-01   &              &  9.5511e-01 &             &     \\
1/8 &      5.8248e-01   &  6.0502e-01  &  5.3014e-01 &  4.5896e-01 & 3.9074e-01\\
1/16 &     2.6521e-01   &  2.6763e-01  &  1.8339e-01 &  9.4611e-02 & 1.1620e-01\\
1/32 &     1.2121e-01   &  1.3214e-01  &  5.0599e-02 &  1.2457e-02 & 2.9841e-02\\
1/64 &     5.8610e-02   &  6.6580e-02  &  1.2986e-02 &  2.1927e-03 & 7.4567e-03\\
1/128 &    2.9033e-02   &  3.3383e-02  &  3.2693e-03 &  5.0283e-04 & 1.8578e-03\\
1/256 &    1.4482e-02   &  1.6704e-02  &  8.1935e-04 &  1.2149e-04 & 4.6332e-04\\
1/512 &    7.2365e-03   &  8.3538e-03  &  2.0524e-04 &  2.9832e-05 & 1.1566e-04\\
1/1024 &   3.6177e-03   &  4.1771e-03  &  5.1531e-05 &  7.4156e-06 & 2.8894e-05\\
\noalign{\smallskip}\hline
\end{tabular}
\caption{$\norm{\na u-\na u_h}_0/\abs{u}_1$, $\norm{\na u-R\na u_h}_0/\abs{u}_1$, $\norm{\na u-G_h u_h}_0/\abs{u}_1$, $\norm{\na u-RG_H u_h}_0/\abs{u}_1$ and $\norm{\na u-G_h u_I}_0/\abs{u}_1$ over $\T_h$ for $k=10$.}
\label{tab1}
\end{center}
\end{table}

\begin{table}
\begin{center}
\begin{tabular}{|c|c|c|c|c|c|}
\hline\noalign{\smallskip}
h & $\na u_h$ & $R\na u_h$ & $G_hu_h$ & $RG_hu_h$ & $G_hu_I$  \\
\noalign{\smallskip}\hline\noalign{\smallskip}
1/4 &      9.9910e-01   &              &  9.9873e-01 &             &           \\
1/8 &      1.0048e+00   &  1.0087e+00  &  1.0056e+00 &  1.0096e+00 & 1.0055e+00\\
1/16 &     1.0693e+00   &  1.1290e+00  &  9.9738e-01 &  1.0036e+00 & 1.0014e+00\\
1/32 &     1.1929e+00   &  1.3541e+00  &  1.0570e+00 &  1.1145e+00 & 5.9028e-01\\
1/64 &     1.1021e+00   &  1.3134e+00  &  1.0128e+00 &  1.1568e+00 & 1.8951e-01\\
1/128 &    3.9158e-01   &  3.9411e-01  &  3.5433e-01 &  3.2505e-01 & 5.0046e-02\\
1/256 &    1.2126e-01   &  9.5288e-02  &  9.2998e-02 &  2.9092e-02 & 1.2631e-02\\
1/512 &    4.5197e-02   &  4.4683e-02  &  2.3462e-02 &  2.0761e-03 & 3.1591e-03\\
1/1024 &   2.0172e-02   &  2.2284e-02  &  5.8762e-03 &  2.2653e-04 & 7.8911e-04\\
\noalign{\smallskip}\hline
\end{tabular}
\caption{$\norm{\na u-\na u_h}_0/\abs{u}_1$, $\norm{\na u-R\na u_h}_0/\abs{u}_1$, $\norm{\na u-G_h u_h}_0/\abs{u}_1$, $\norm{\na u-RG_H u_h}_0/\abs{u}_1$ and $\norm{\na u-G_h u_I}_0/\abs{u}_1$ over $\T_h$ for $k=50$.}
\label{tab2}
\end{center}
\end{table}

Table~\ref{tab1} shows the relative $L^2$-norm errors of $\na u_h$, $R\na u_h$ and their Richardson extrapolation in the case $k=10$.
As we expected,  $\norm{\na u-\na u_h}_0/\abs{u}_1$ converges at rate $O(h)$ and $\norm{\na u-G_hu_h}_0/\abs{u}_1$ decays at rate $O(h^2)$.
We can observe that the relative error of $R\na u_h$ is worse than that of $\na u_h$ and the relative error of $RG_hu_h$ is much better than that of $G_hu_h$.
For a larger wave number $k=50$, the relative errors are shown in Table~\ref{tab2}. The data demonstrate similar behaviors of numerical solutions to those in Table~\ref{tab1}
when the mesh size is sufficient small.

The good behavior of the operator $RG_h\cdot$ in Table~\ref{tab1} and Table~\ref{tab2} makes it possible to define the following {\it a posteriori} error estimator
\eq{\eta_h = \norm{RG_hu_h-\na u_h}_0.\label{posteriori}}
Table~\ref{tab3} and Table~\ref{tab4} illustrate the asymptotic exactness of the error estimator based on the recovery operator $G_h$ and the extrapolation operator $R$.
\begin{table}
\begin{center}
\begin{tabular}{|c|cc|cc|}
\hline\noalign{\smallskip}
&\multicolumn{2}{|c|}{k=10}& \multicolumn{2}{|c|}{k=30}\\
\hline\noalign{\smallskip}
h & $\norm{\na u-\na u_h}_0$ & $\eta_h$ & $\norm{\na u-\na u_h}_0$ & $\eta_h$  \\
\noalign{\smallskip}\hline\noalign{\smallskip}
1/4 &      7.9165e-01   &              &  8.9499e-01 & \\
1/8 &      4.8127e-01   &  3.5128e-01  &  8.8726e-01 & 2.8705e-01\\
1/16&     2.1913e-01   &  1.9677e-01  &  9.9983e-01 & 4.2429e-01\\
1/32 &     1.0015e-01   &  9.8802e-02  &  7.9908e-01 & 3.1293e-01\\
1/64 &     4.8426e-02   &  4.8414e-02  &  2.9350e-01 & 2.2032e-01\\
1/128 &    2.3988e-02   &  2.3994e-02  &  1.0199e-01 & 9.7829e-02\\
1/256 &    1.1965e-02   &  1.1966e-02  &  4.2406e-02 & 4.2259e-02\\
1/512 &    5.9791e-03   &  5.9793e-03  &  1.9948e-02 & 1.9945e-02\\
1/1024 &   2.9891e-03   &  2.9891e-03  &  9.8094e-03 & 9.8098e-03\\
\noalign{\smallskip}\hline
\end{tabular}
\caption{The errors of $\na u_h$ and the a posteriori error estimator over $\T_h$ for $k=10$ and $k=30$}\label{tab3}
\end{center}
\end{table}

\begin{table}
\begin{center}
\begin{tabular}{|c|cc|cc|}
\hline\noalign{\smallskip}
&\multicolumn{2}{|c|}{k=60}& \multicolumn{2}{|c|}{k=120}\\
\hline\noalign{\smallskip}
h & $\norm{\na u-\na u_h}_0$ & $\eta_h$ & $\norm{\na u-\na u_h}_0$ & $\eta_h$  \\
\noalign{\smallskip}\hline\noalign{\smallskip}
1/4 &    8.3466e-01 &            & 8.2188e-01 &           \\
1/8 &    8.8755e-01 & 6.1990e-02 & 8.5109e-01 & 1.5580e-02\\
1/16 &   9.0952e-01 & 2.8306e-01 & 8.7877e-01 & 4.7890e-02\\
1/32 &   9.9385e-01 & 3.7793e-01 & 9.2054e-01 & 2.9554e-01\\
1/64 &   1.1260e+00 & 2.9684e-01 & 9.8618e-01 & 3.4281e-01\\
1/128 &  5.4450e-01 & 3.1504e-01 & 1.0975e+00 & 2.6629e-01\\
1/256 &  1.6186e-01 & 1.4898e-01 & 9.6799e-01 & 3.3001e-01\\
1/512 &  5.3586e-02 & 5.3062e-02 & 3.0027e-01 & 2.5931e-01\\
1/1024 & 2.1947e-02 & 2.1932e-02 & 8.3593e-02 & 8.2496e-02\\
\noalign{\smallskip}\hline
\end{tabular}
\caption{The errors of $\na u_h$ and the a posteriori error estimates over $\T_h$ for $k=60$ and $k=120$}
\label{tab4}
\end{center}
\end{table}

Next we turn to the Delaunay triangulation over the unit square $\Om$ and L-shaped domain $\Om_L=\Om\backslash[0.5,1]\times[0.5,1]$.
The initial mesh $\T^D_0$ is obtained by using a Delaunay triangulation algorithm. Then $\T^D_j$ is obtained from $\T^D_{j-1}$ by dividing each
triangle into four congruent triangles. Data in Tables \ref{tab1_Delaunay}--\ref{tab4_Delaunay} show the superconvergence of the recovered gradient at the rate of $O(h^2)$
(see the fourth columns) and the asymptotic exactness of $\eta_h$ (see the sixth columns) over Delaunay triangulations. Therefore, the PPR method combined with the Richardson extrapolation performs very well and leads to an {\it a posteriori} error estimator.
\begin{table}
\begin{center}
\begin{tabular}{|c|c|c|c|c|c|}
\hline\noalign{\smallskip}
m& DOF & $\norm{\na u-\na u_h}_0$ & $\norm{\na u-G_hu_h}_0$ & $\norm{\na u-RG_hu_h}_0$ & $\eta_h$\\
\noalign{\smallskip}\hline\noalign{\smallskip}
0 & 54      & 4.1028e-01 &  4.2562e-01 &             &            \\
1 & 193     & 1.8409e-01 &  1.4508e-01 &  8.2579e-02 &  1.8430e-01\\
2 & 729     & 8.7025e-02 &  3.8073e-02 &  1.2520e-02 &  8.7486e-02\\
3 & 2833    & 4.2749e-02 &  9.4214e-03 &  2.6138e-03 &  4.2840e-02\\
4 & 11169   & 2.1275e-02 &  2.3269e-03 &  5.7444e-04 &  2.1286e-02\\
5 & 44353   & 1.0625e-02 &  5.7802e-04 &  1.3013e-04 &  1.0626e-02\\
6 & 176769  & 5.3111e-03 &  1.4423e-04 &  3.0557e-05 &  5.3112e-03\\
7 & 705793  & 2.6553e-03 &  3.6082e-05 &  7.3719e-06 &  2.6554e-03\\
\noalign{\smallskip}\hline
\end{tabular}
\caption{The errors of $\na u_h$, $G_hu_h$, $RG_hu_h$ and the a posteriori error estimates over the Delaunay triangulation $\T^D_m$ of the unit square $\Om$ ($m=0,1,2,\ldots,7$) for $k=10$.}
\label{tab1_Delaunay}
\end{center}
\end{table}

\begin{table}
\begin{center}
\begin{tabular}{|c|c|c|c|c|c|}
\hline\noalign{\smallskip}
m& DOF & $\norm{\na u-\na u_h}_0$ & $\norm{\na u-G_hu_h}_0$ & $\norm{\na u-RG_hu_h}_0$ & $\eta_h$\\
\noalign{\smallskip}\hline\noalign{\smallskip}
0 & 54    &  8.8926e-01 &  8.8934e-01 &             &            \\
1 & 193   &  9.4715e-01 &  8.7929e-01 &  8.8957e-01 &  4.0315e-01\\
2 & 729   &  9.4889e-01 &  8.7838e-01 &  8.9673e-01 &  2.9724e-01\\
3 & 2833  &  1.0437e+00 &  9.5886e-01 &  1.0742e+00 &  2.9106e-01\\
4 & 11169 &  3.7904e-01 &  3.4890e-01 &  3.1643e-01 &  2.6232e-01\\
5 & 44353 &  1.1633e-01 &  9.2797e-02 &  2.9542e-02 &  1.0972e-01\\
6 & 176769&  4.2413e-02 &  2.3489e-02 &  2.3164e-03 &  4.2156e-02\\
7 & 705793&  1.8666e-02 &  5.8886e-03 &  3.1415e-04 &  1.8661e-02\\
\noalign{\smallskip}\hline
\end{tabular}
\caption{The errors of $\na u_h$, $G_hu_h$, $RG_hu_h$ and the a posteriori error estimates over the Delaunay triangulation $\T^D_m$ of the unit square $\Om$ ($m=0,1,2,\ldots,7$) for $k=60$.}
\label{tab2_Delaunay}
\end{center}
\end{table}

\begin{table}
\begin{center}
\begin{tabular}{|c|c|c|c|c|c|}
\hline\noalign{\smallskip}
m& DOF & $\norm{\na u-\na u_h}_0$ & $\norm{\na u-G_hu_h}_0$ & $\norm{\na u-RG_hu_h}_0$ & $\eta_h$\\
\noalign{\smallskip}\hline\noalign{\smallskip}
   0  & 279   &   1.1742e-01 &  7.5574e-02 &             &            \\
   1  & 1057  &   5.8161e-02 &  1.8932e-02 &  7.9039e-03 &  5.8495e-02\\
   2  & 4113  &   2.9051e-02 &  4.7206e-03 &  1.6396e-03 &  2.9076e-02\\
   3  & 16225 &   1.4529e-02 &  1.1963e-03 &  3.7617e-04 &  1.4530e-02\\
   4  & 64449 &   7.2656e-03 &  3.0479e-04 &  8.8224e-05 &  7.2656e-03\\
   5  & 256900 &   3.6331e-03 &  7.7808e-05 &  2.1327e-05 &  3.6331e-03\\
   6  & 1025800 &   1.8166e-03 &  1.9875e-05 &  5.2388e-06 &  1.8166e-03\\
\noalign{\smallskip}\hline
\end{tabular}
\caption{The errors of $\na u_h$, $G_hu_h$, $RG_hu_h$ and the a posteriori error estimates over the Delaunay triangulation $\T^D_m$ of the L-shaped domian $\Om_L$ ($m=0,1,2,\ldots,7$) for $k=10$.}
\label{tab3_Delaunay}
\end{center}
\end{table}

\begin{table}
\begin{center}
\begin{tabular}{|c|c|c|c|c|c|}
\hline\noalign{\smallskip}
m& DOF & $\norm{\na u-\na u_h}_0$ & $\norm{\na u-G_hu_h}_0$ & $\norm{\na u-RG_hu_h}_0$ & $\eta_h$\\
\noalign{\smallskip}\hline\noalign{\smallskip}
   0 &  279  & 9.3370e-01  &   8.0586e-01 &             &            \\
   1 &  1057 &  9.9962e-01 &   8.9924e-01 &  9.6330e-01 &  3.0810e-01\\
   2 &  4113 &  5.7234e-01 &   5.3249e-01 &  5.8243e-01 &  2.6218e-01\\
   3 &  16225&   1.8276e-01&   1.5775e-01 &  8.6317e-02 &  1.5438e-01\\
   4 &  64449&   6.2328e-02&   4.0796e-02 &  7.0117e-03 &  6.0954e-02\\
   5 &  256900&  2.5937e-02&   1.0278e-02 &  7.4238e-04 &  2.5894e-02\\
   6 &  1025800& 1.2217e-02&   2.5744e-03 &  1.4689e-04 &  1.2217e-02\\
\noalign{\smallskip}\hline
\end{tabular}
\caption{The errors of $\na u_h$, $G_hu_h$, $RG_hu_h$ and the a posteriori error estimates over the Delaunay triangulation $\T^D_m$ of the L-shaped domian $\Om_L$ ($m=0,1,2,\ldots,7$) for $k=60$.}
\label{tab4_Delaunay}
\end{center}
\end{table}

Finally, we use the a posteriori error estimator \eqref{posteriori} to simulate the Helmholtz problem
\eq{ -\De u - k^2 u &=\frac{\sin(k\tilde r)}{\tilde r}e^{-50\tilde r}  \qquad\mbox{in  }\quad \Om,\label{num-eq2a}\\
\frac{\pa u}{\pa n} +\i k u& =0 \quad\qquad\qquad\qquad\mbox{on }\quad \Ga,\label{num-eq2b}}
where $\Om$ is the unit square $[0,1]\times[0,1]$ and $\tilde r=\sqrt{(x-0.5)^2+(y-0.5)^2}$. Let $u_h$ be the linear finite element solution to the problem~\eqref{num-eq2a}--\eqref{num-eq2b} over $\T_m$.

We do not have the expression of the exact solution to the problem~\eqref{num-eq2a}--\eqref{num-eq2b}.  However, Table~\ref{tab5}
shows that the solutions are relatively accurate when the mesh sizes are greater than $128,512,1024$ for the wave numbers $k=30,60,120$, respectively.
The graphs of numerical solutions for different $k$ and $h$ in Figure~\ref{num-fig5}--\ref{num-fig7} illustrate the findings.

\begin{table}
\begin{center}
\begin{tabular}{|c|c|c|c|}
\hline\noalign{\smallskip}
m  & $k=30$ & $k=60$ & $k=120$  \\
\noalign{\smallskip}\hline\noalign{\smallskip}
8 &    5.6287e-03 & 1.4636e-04 & 6.2692e-05\\
16 &    2.0711e-02 & 2.1027e-02 & 3.6726e-04\\
32 &   2.2812e-02 & 4.0705e-02 & 3.6307e-02\\
64 &   1.4604e-02 & 3.6427e-02 & 5.1698e-02\\
128 &  7.1816e-03 & 2.6521e-02 & 4.9015e-02\\
256 &  3.4422e-03 & 1.2030e-02 & 4.3212e-02\\
512 &  1.6928e-03 & 5.1373e-03 & 2.0382e-02\\
1024 & 8.4244e-04 & 2.4126e-03 & 7.4914e-03\\
\noalign{\smallskip}\hline
\end{tabular}
\caption{ The a posteriori error estimator $\eta_h$ over $\T_m$ ($m=8,16,\ldots,1024$) for $k=30,60,120$}
\label{tab5}
\end{center}
\end{table}

\begin{figure}[htbp]
\begin{center}
\includegraphics[width=0.9\textwidth]{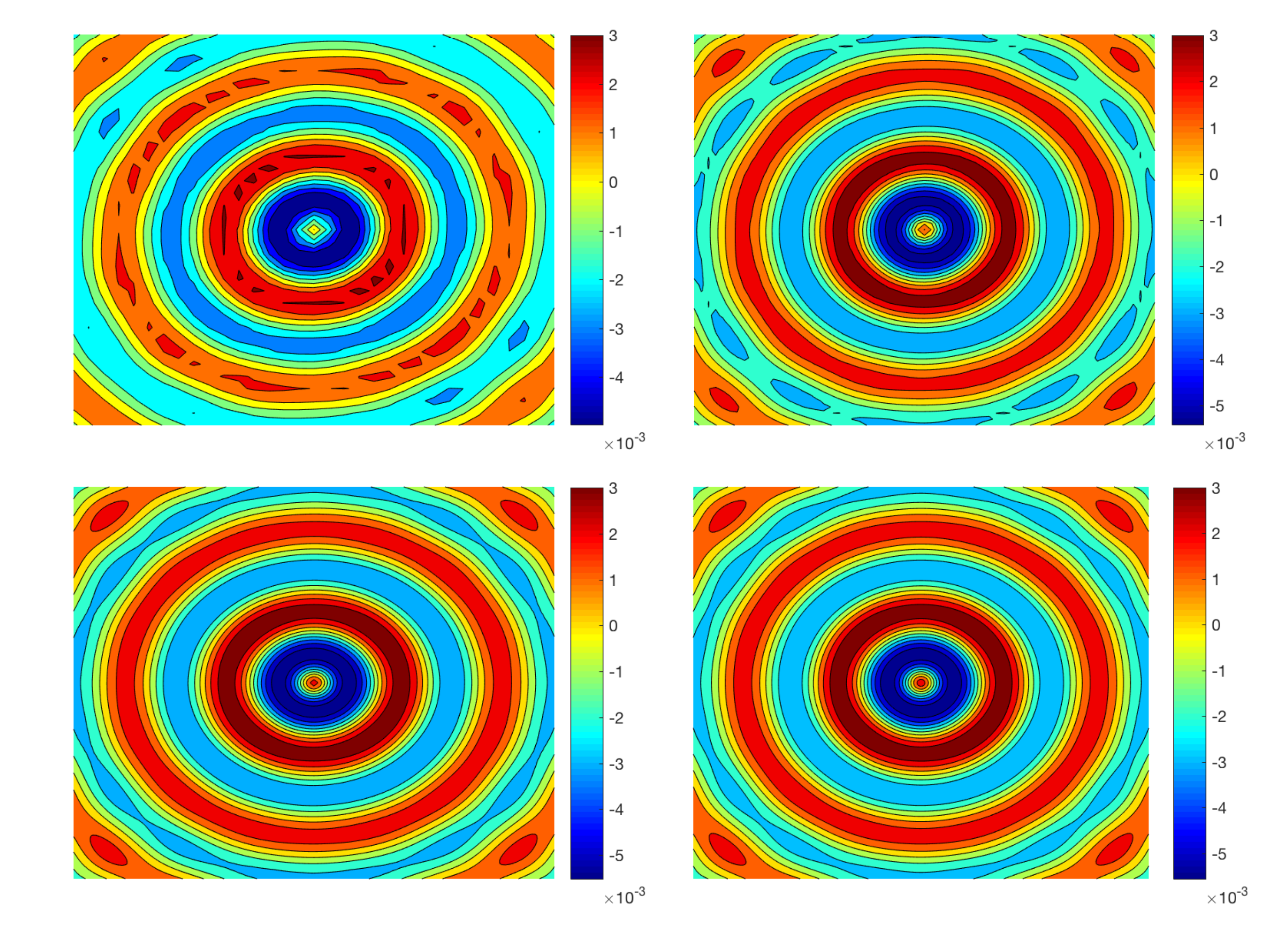}
\caption{The real part of the finite element solution for the equations~\eqref{num-eq2a}--\eqref{num-eq2b} for $k=30$ over $\T_m$ with $m=8$ (top left), $m=32$ (top right), $m=128$ (bottom left) and $m=1024$ (bottom right).}
\label{num-fig5}
\end{center}
\end{figure}

\begin{figure}[htbp]
\begin{center}
\includegraphics[width=0.9\textwidth]{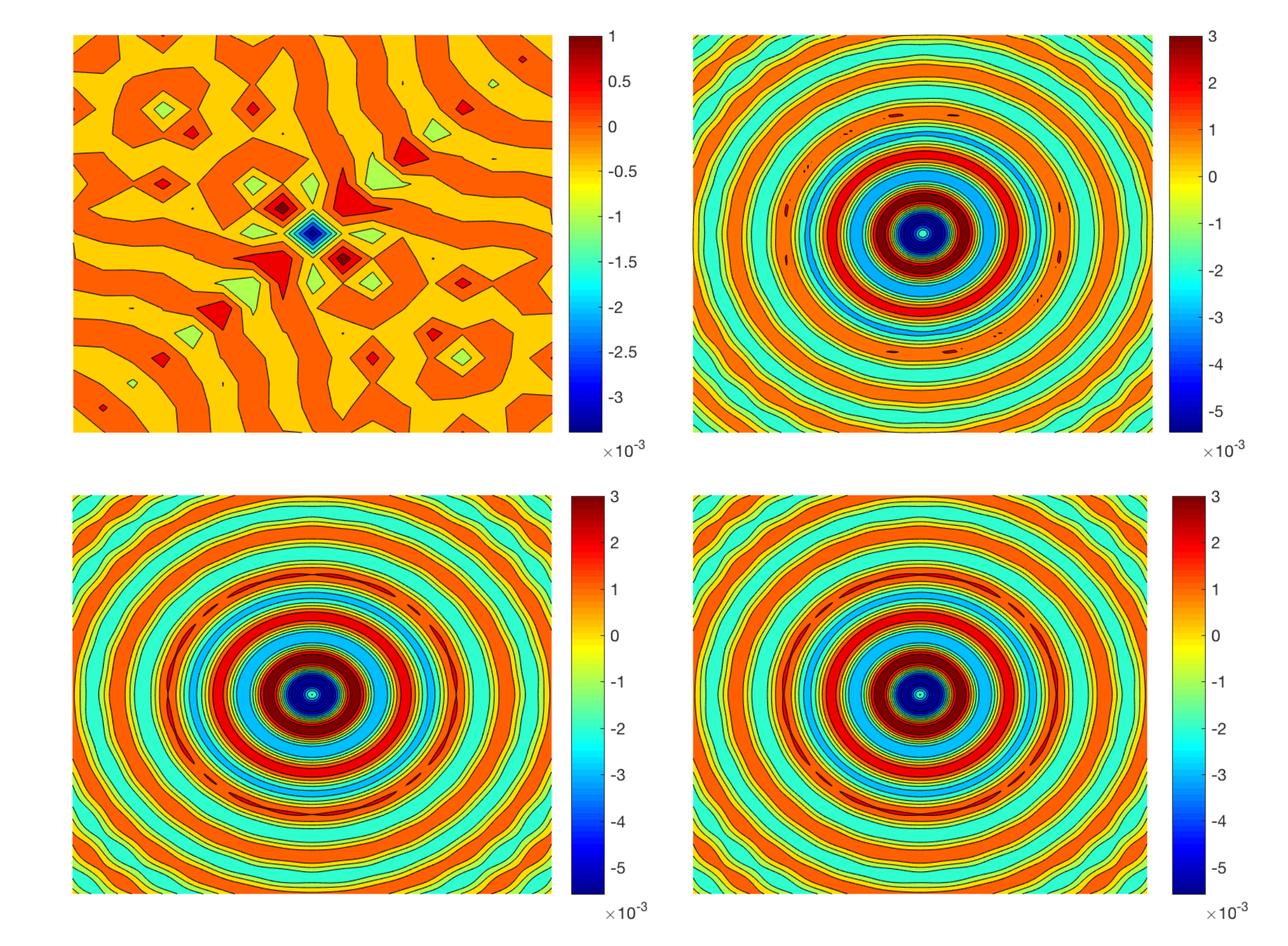}
\caption{The real part of the finite element solution for the equations~\eqref{num-eq2a}--\eqref{num-eq2b} for $k=60$ over $\T_m$ with $m=16$ (top left), $m=128$ (top right), $m=512$ (bottom left) and $m=1024$ (bottom right).}
\label{num-fig6}
\end{center}
\end{figure}

\begin{figure}[htbp]
\begin{center}
\includegraphics[width=0.9\textwidth]{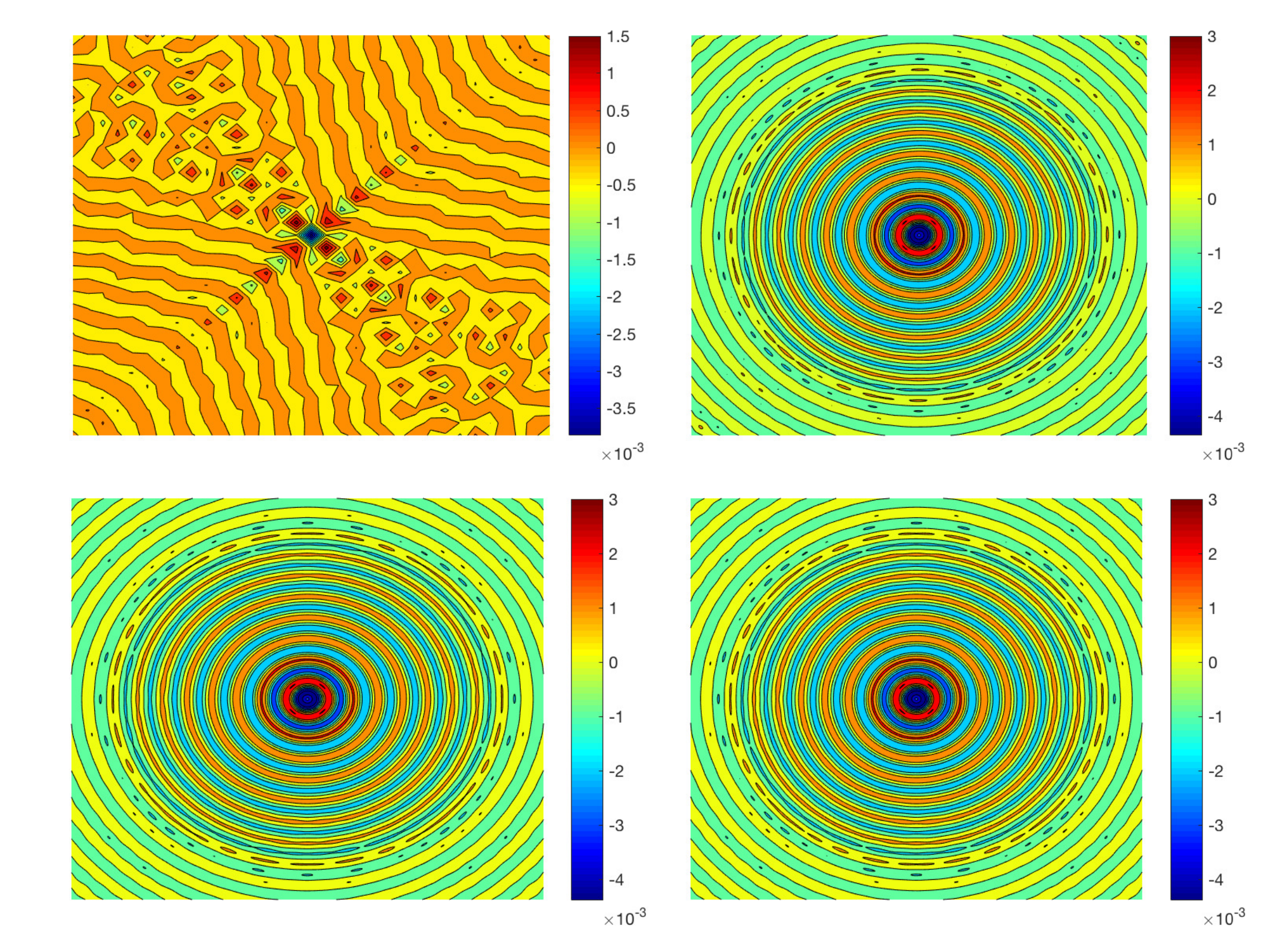}
\caption{The real part of the finite element solution for the equations~\eqref{num-eq2a}--\eqref{num-eq2b} for $k=120$ over $\T_m$ with $m=32$ (top left), $m=512$ (top right), $m=1024$ (bottom left) and $m=2048$ (bottom right).}
\label{num-fig7}
\end{center}
\end{figure}

\section{Concluding Remarks}
In this work, we have studied superconvergence properties of linear FEM based on PPR for the Helmholtz equation with large wave number.
We analyzed (1) gradient error between the finite element solution and the linear interpolation  $\|\nabla(u_h-u_I)\|_{L^2(\Om)}$ (c.f. \eqref{eq:th1eq1}) and
(2) the error between the true gradient and recovered gradient from the finite element solution  $\norm{\na u-G_hu_h}_{L^2(\Om)}$ (c.f. \eqref{ghpoll}) under the mesh condition $k(kh)^2 \le C_0$ (\ref{meshcond}). Both errors consist of two parts $C_1kh^{1+\alpha}+C_2k(kh)^2$ with
the first term improved by a factor $h^\alpha$ and the second term remained the same from the original gradient error.
We see that the recovered gradient still suffers from the pollution error.
We further analyzed (3) the difference between the finite element solution gradient
and the recovered gradient by PPR and found that the pollution part of this error can be improved to
$k(kh)^3$ (c.f. \eqref{eq:th2eq1}), which implies
{$\norm{G_hu_h-\na u_h}_0\ls kh$ if $k(kh)^2\le C_0$, (see remark~5.1). In another word,  $\norm{G_hu_h-\na u_h}_0$ can not provide a good measure of the $H^1$-error of the finite element solution for $h$ in the preasymptotic range since  $\norm{\na u-\na u_h}_0$ contains also the pollution term. However, the superconvergence rate $O(h^2)$ of the recovered gradient makes it possible that the Richardson extrapolation improves the numerical solution further. Therefore, $\norm{RG_hu_h-\na u_h}_0$ can measure the $H^1$-error of the finite element solution very well
and leads to asymptotically exact {\it a posteriori} error estimators. All aforementioned error bounds are
verified by numerical tests in Section~\ref{num}.
 As by-products, we also estimated the following quantities:
$\norm{G_hu_I-\na u}_{L^2(\Om)}$ (c.f. \eqref{eq:th0eq1}),  $\norm{\na P_hu-\na u_I}_{L^2(\Om)}$ (c.f. \eqref{eq:nabla-phu-ui}), $\norm{G_hP_hu-\na u}_{L^2(\Om)}$ (c.f., \eqref{phusup}),
and found that they have a common pollution term $(kh)^2$, which indicates that these quantities suffer much less from the pollution.}

\bibliographystyle{siam}
\bibliography{referrence}
\end{document}